\DeclareMathOperator{\GL}{GL}
\DeclareMathOperator{\slMath}{sl}
\newcommand{\ca}{$C^*$-algebra}
\newenvironment{psmallmatrix}
  {\left(\begin{smallmatrix}}
  {\end{smallmatrix}\right)}
\newtheorem{lma}{Lemma}[section]
\newaliascnt{thmCt}{lma}
\newtheorem{thm}[thmCt]{Theorem}
\newaliascnt{corCt}{lma}
\newtheorem{cor}[corCt]{Corollary}
\newaliascnt{prpCt}{lma}
\newtheorem{prp}[prpCt]{Proposition}
\theoremstyle{definition}
\newaliascnt{rmkCt}{lma}
\newtheorem{rmk}[rmkCt]{Remark}
\newaliascnt{exaCt}{lma}
\newtheorem{exa}[exaCt]{Example}
\newaliascnt{qstCt}{lma}
\newtheorem{qst}[qstCt]{Question}
\newcounter{theoremintro}
\newtheorem{exaIntro}[theoremintro]{Example}
\newaliascnt{thmIntroCt}{theoremintro}
\newtheorem{thmIntro}[thmIntroCt]{Theorem}
\def\today{\number\day\space\ifcase\month\or   January\or February\or
   March\or April\or May\or June\or   July\or August\or September\or
   October\or November\or December\fi\   \number\year}
\title{Products of commutators in matrix rings}
\date{\today}
\author{Matej Bre\v sar}
\address{Matej Bre\v sar, Faculty of Mathematics and Physics,  University of Ljubljana; 
Faculty of Natural Sciences and Mathematics, University 
of Maribor; and Institute of Mathematics, Physics, and Mechanics, Ljubljana, Slovenia} \email{matej.bresar@fmf.uni-lj.si}
\urladdr{www.fmf.uni-lj.si/en/directory/21/bresar-matej}
\author[Eusebio Gardella]{Eusebio Gardella}
\address{Eusebio Gardella
Department of Mathematical Sciences, Chalmers University of
Technology and University of Gothenburg, Gothenburg SE-412 96, Sweden.}
\email{gardella@chalmers.se}
\urladdr{www.math.chalmers.se/~gardella}
\author{Hannes Thiel}
\address{Hannes Thiel,
Department of Mathematical Sciences, Chalmers University of
Technology and University of Gothenburg, Gothenburg SE-412 96, Sweden.}
\email{hannes.thiel@chalmers.se}
\urladdr{www.hannesthiel.org}
\thanks{The first named author was partially supported by the Slovenian Research Agency (ARRS) Grant P1-0288.
The second named author was partially supported by the Swedish Research Council Grant 2021-04561.
The third named author was partially supported by the Knut and Alice Wallenberg Foundation (KAW 2021.0140).
}
\subjclass[2020]%
{Primary 
16S50. % Endomorphism rings; matrix rings 
Secondary 
15A30, % Algebraic systems of matrices 
16K40, % Infinite-dimensional and general division rings 
16W25, % Derivations, actions of Lie algebras
46L05, % General theory of C∗-algebras 
46L10. % General theory of von Neumann algebras 
}
\keywords{Commutator, matrix ring, division ring, Bass stable rank,  K-Hermite ring, derivation.
}
\date{\today}
\begin{document}

%==========================================================================================
\begin{abstract} Let $R$ be a ring and let $n\ge 2$.
We discuss the question of whether every element in the matrix ring $M_n(R)$ is a product of (additive) commutators $[x,y]=xy-yx$, for $x,y\in M_n(R)$. 
An example showing that this does not always hold, even when $R$ is commutative, is provided. 
If, however, $R$ has Bass stable rank one, then under various additional conditions every element in $M_n(R)$ is a product of three commutators. 
Further, if $R$ is a division ring with infinite center, then every element in $M_n(R)$ is a product of two commutators. 
If $R$ is a field and $a\in M_n(R)$, then every element in $M_n(R)$ is a sum of elements of the form $[a,x][a,y]$ with $x,y\in M_n(R)$ if and only if the degree of the minimal polynomial of $a$ is greater than $2$.
\end{abstract}

\maketitle
%\tableofcontents

%==========================================================================================
%==========================================================================================
\section{Introduction}

%==========================================================================================
By the commutator of elements $a$ and $b$ in a ring we will always mean the additive commutator $[a,b]=ab-ba$.
The second and third named authors recently showed that if a unital ring $S$ is generated by its commutators as an ideal, then there exists a natural number $N$ such that every element $a \in S$ is a sum of $N$ products of pairs of commutators, that is, $a=\sum_{i=1}^N [b_i,c_i][d_i,e_i]$ for some $b_i,c_i,d_i,e_i\in S$;
see \cite[Theorem~3.4]{GarThi23arX:GenByCommutators}.
The minimal such $N$, denoted  $\xi(S)$, was computed or estimated for various classes of rings and \ca{s}. 
In particular, for any unital, possibly noncommutative ring $R$, the ring
$M_n(R)$  of $n$-by-$n$ matrices over~$R$ satisfies $\xi(M_n(R)) \le 2$ for every $n\ge 2$;
see \cite[Theorem~5.4]{GarThi23arX:GenByCommutators}.

This paper is mainly concerned with the question of whether every
matrix in~$M_n(R)$ is actually the product of (two or more) commutators rather than a sum of double products. 
The fundamental case where $R=F$ is a field was treated quite a while ago by Botha who proved that every matrix in $M_n(F)$ is a product of two commutators \cite[Theorem~4.1]{Bot97ProdMatPrescribedTraces}, that is to say, $\xi(M_n(F))=1$ for every field~$F$ and every $n\ge 2$ (for fields of characteristic $0$ this was proved earlier in \cite{Wu89OpFactorization}). 
We will be interested in more general rings.

Our problem can be placed in a more general context. 
Over the last years, there has been a growing interest in images of noncommutative polynomials in matrix algebras. 
We refer the reader to the recent survey \cite{KanMalRowYav20EvalNcPolynomials} on this topic. 
Note that the condition that $\xi(M_n(R)) =1$ can be reformulated as  saying that the image of the polynomial $f= [X_1,X_2][X_3,X_4]$  on $M_n(R)$ is the whole $M_n(R)$, and the aforementioned result by Botha confirms the 
L'vov-Kaplansky conjecture for $f$;  
this conjecture states that the image of any multilinear polynomial on $M_n(F)$ is a vector subspace.

\medskip

%==========================================================================================
Let us  present the main results of this paper.
In \autoref{sec:NotProduct}, we provide an example showing the nontriviality of our problem. 
The following is a simplified version of \autoref{prp:NotProdComm}.

%==========================================================================================
\begin{exaIntro}
\label{eA}
There exist a commutative, unital ring $R$ and a matrix $a \in M_2(R)$ that  cannot be written as a product of commutators. 
\end{exaIntro}

%==========================================================================================
Together with the aforementioned result from \cite{GarThi23arX:GenByCommutators}, Example \ref{eA} shows that there exist rings $R$ such that $\xi(M_2(R))= 2$. 
This answers \cite[Question~5.7]{GarThi23arX:GenByCommutators}.

\autoref{sec:Bsr} is primarily devoted to matrix algebras over algebras having Bass stable rank one. 
The following is a combination of \autoref{prp:ProdCommSR1} and \autoref{prp:ProdCommHermiteSR1}.

%==========================================================================================
\begin{thmIntro}
\label{thmb}
Let $A$ be a unital algebra over an infinite field, and assume that $A$ has Bass stable rank one. 
Let $n\geq 3$.
Then the following statements hold:
\begin{enumerate} 
\item 
Every matrix in $\GL_n(A)$ is a product of three commutators.
\item 
If $A$ is right K-Hermite, then every matrix in $M_n(A)$ is a product of three commutators.
\end{enumerate}
\end{thmIntro}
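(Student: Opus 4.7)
My plan is to reduce the problem, via Bass stable rank one (and K-Hermite in part (2)), to a canonical form that can then be explicitly exhibited as a product of three additive commutators, leveraging the infinite central field and the hypothesis $n\ge 3$.

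For part (1), the Whitehead--Bass consequence of stable rank one gives
\[
  \GL_n(A) \,=\, E_n(A)\cdot D, \qquad D:=\{\mathrm{diag}(u,1,\dots,1):u\in A^\times\},
\]
so any $a\in \GL_n(A)$ writes as $a=e\cdot \mathrm{diag}(u,1,\dots,1)$ with $e\in E_n(A)$ and $u\in A^\times$. To realise $a=[X_1,Y_1][X_2,Y_2][X_3,Y_3]$, I would write $a=C_1C_2C_3$ by first picking invertible $C_1,C_2\in M_n(F)$ of trace zero---so that Shoda's theorem in $M_n(F)$ guarantees each is an additive commutator---with enough freedom that $C_3:=(C_1C_2)^{-1}a$ has trace zero and, moreover, a convenient shape (e.g.\ strictly triangular up to a controlled error). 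The infinite central field $F\subseteq Z(A)$ supplies the scalars required for this choice, and $n\ge 3$ is essential so that the identity
\[
  aE_{ij}\,=\,[aE_{ik},E_{kj}] \qquad (i,j,k \text{ distinct})
\]
is available to exhibit strictly triangular $A$-valued matrices as single additive commutators.

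For part (2), right K-Hermiteness lets one write any $a\in M_n(A)$ as $a=u\cdot t$ with $u\in \GL_n(A)$ and $t$ upper triangular. Applying part (1) to $u$, the remaining task is to absorb $t$ into the last factor $C_3$ of the decomposition: this amounts to choosing $C_1,C_2$ so that the triangular multiplier $t$ is compatible with the triangular shape used for $C_3$, so that the displayed identity still applies; the trace of $a$, being known from the data, lets one still satisfy the trace constraint on the modified $C_3$ by a scalar adjustment of $C_1,C_2$.

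The main obstacle, I expect, is the Shoda-type statement for $M_n(A)$: showing that the trace-zero matrix $C_3$ arising above is actually an additive commutator in $M_n(A)$ (and not merely a sum of commutators). Over a field this is classical. Over a ring with stable rank one and $n\ge 3$ it should follow by reducing via elementary row-and-column operations (each of which is an additive commutator by the identity above) to a strictly triangular matrix, to which the identity then applies directly. The infinite field hypothesis is used throughout to handle scalars and to avoid small-characteristic pathologies that would obstruct the trace-adjustment step.
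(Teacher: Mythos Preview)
Your plan has a genuine gap at precisely the point you flag as ``the main obstacle,'' and the proposed resolution does not work.

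You want to write $a=C_1C_2C_3$ with $C_1,C_2\in M_n(F)$ invertible of trace zero (hence commutators by Shoda over the field~$F$) and then argue that $C_3:=(C_1C_2)^{-1}a$ is a single commutator in $M_n(A)$. But since $(C_1C_2)^{-1}$ lies in $M_n(F)$, the matrix $C_3$ is just $a$ after an $F$-linear change of basis; it has no more structure than $a$ itself. In particular: (i) arranging $\mathrm{tr}(C_3)=0$ amounts to forcing a fixed $F$-linear combination of \emph{all} entries $a_{jk}\in A$ to vanish, and there is no reason an $F$-scalar choice of $C_1,C_2$ can achieve this for arbitrary $a$; (ii) even if $C_3$ had trace zero, trace-zero matrices over a noncommutative ring of stable rank one need not be commutators, and they certainly need not be similar to strictly triangular matrices (e.g.\ $\mathrm{diag}(1,-1,0,\dots,0)$ is not nilpotent); (iii) one-sided elementary row or column operations do not preserve the property of being a commutator, so the reduction you sketch does not give what you need. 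The Whitehead--Bass decomposition $a=e\cdot\mathrm{diag}(u,1,\dots,1)$ is correct but is never actually used in your argument.

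The paper proceeds quite differently. Using the Vaserstein--Wheland factorisation for stable rank one (and, in part~(2), the K-Hermite hypothesis), one shows $a$ is \emph{similar} to a product $bc$ with $b$ lower triangular and $c$ upper triangular with $1$'s on the diagonal. The two key ingredients are then purely about triangular matrices: first, an explicit factorisation shows that every triangular matrix in $M_n(A)$ with $n\ge 3$ is a product of two zero-diagonal matrices, and the infinite base field makes every zero-diagonal matrix a commutator; second, any triangular trace-zero matrix is a single commutator. One then inserts a diagonal matrix $e$ with invertible entries from $F$ summing to zero (here is where the infinite field is used), writing $bc=(be^{-1})(ec)$: the factor $ec$ is triangular of trace zero (one commutator), while $be^{-1}$ is merely triangular (two commutators). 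This avoids entirely the Shoda-type problem over $A$ that your approach runs into.
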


%==========================================================================================
We also prove that if $A$ is any unital algebra over an infinite field and $n\geq 3$, then
every triangular matrix in $M_n(A)$ is a product of two commutators (\autoref{prp:TriangularProductCommutators}). 
This is needed in the proof of \autoref{thmb}, but is  of independent interest. 

Matrix rings over division rings are the topic of \autoref{sec:DivRgs}.
The following is \autoref{prp:ProdCommInftyCenter}.

%==========================================================================================
\begin{thmIntro}
Let $D$ be a division ring with infinite center and 
let $n\geq 2$.
Then every matrix $a\in M_n(D)$ is a product of two commutators. 
\end{thmIntro}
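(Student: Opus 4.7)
My plan is to reduce the problem to a trace condition via the classical characterization of single commutators in matrix rings, and then to verify the trace condition using the infiniteness of the center.

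\emph{Step 1 (Reduction).} For $n \geq 2$ and $D$ a division ring, a matrix $m \in M_n(D)$ equals $[x, y]$ for some $x, y \in M_n(D)$ if and only if $\mathrm{tr}(m) \in [D, D]$, the additive subgroup of $D$ generated by commutators $uv - vu$. For $D$ a field this is due to Shoda and Albert--Muckenhoupt; the extension to division rings is classical. Thus it suffices to factor $a = bc$ in $M_n(D)$ with $\mathrm{tr}(b), \mathrm{tr}(c) \in [D, D]$.

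\emph{Step 2 (A parameterized family of invertible commutators).} I would fix an invertible trace-zero matrix $b_0 \in M_n(D)$ (for example $\mathrm{diag}(1,-1)$ when $n = 2$, or $\mathrm{diag}(1,\ldots,1,-(n-1))$ when $n \geq 3$) and a trace-zero matrix $b_1 \in M_n(D)$ to be chosen later. Set $b := b_0 + \lambda b_1$ for $\lambda \in Z$. Then $\mathrm{tr}(b) = 0 \in [D,D]$, and since $b = b_0(I + \lambda b_0^{-1} b_1)$, invertibility fails only when $-\lambda^{-1}$ is a central eigenvalue of $b_0^{-1} b_1$. Distinct central eigenvalues have $D$-linearly independent eigenvectors, so there are at most $n$ such bad values of $\lambda$; since $Z$ is infinite, $b$ is invertible for cofinitely many $\lambda \in Z$, and therefore $b$ is a single commutator by Step~1.

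\emph{Step 3 (Adjusting the second trace).} Set $c := b^{-1} a$. The remaining task is to choose $b_1$ and $\lambda$ so that $\mathrm{tr}(c) \in [D, D]$. As a rational function of $\lambda \in Z$, one expands
\[
\mathrm{tr}(c) \;=\; \mathrm{tr}(b_0^{-1}a) \;-\; \lambda\, \mathrm{tr}(b_0^{-1} b_1 b_0^{-1} a) \;+\; O(\lambda^2).
\]
For a suitable choice of $b_1$, the linear coefficient $\mathrm{tr}(b_0^{-1} b_1 b_0^{-1} a)$ represents a nonzero class in $D/[D, D]$; because $Z$ is infinite, $\mathrm{tr}(c)$ then ranges over all cosets of $[D,D]$ in $D$, in particular hitting the zero coset, which gives the required factorization $a = bc$ with both factors commutators.

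\emph{Main obstacle.} The crux lies in Step~3: one must show that the $Z$-linear map $b_1 \mapsto \mathrm{tr}(b_0^{-1} b_1 b_0^{-1} a) \pmod{[D,D]}$, defined on trace-zero matrices, surjects onto $D/[D,D]$. I would expect this to require a preliminary similarity bringing $a$ to a non-derogatory (cyclic) form, so that the dependence on $b_1$ is sufficiently generic. The case $n = 2$ is more delicate because the space of trace-zero matrices has low dimension, so there a direct $2 \times 2$ construction may be preferable. In the commutative case $D = Z$ one has $[D,D] = 0$, and the plan specializes to an argument recovering Botha's theorem by solving the single equation $\mathrm{tr}(c) = 0$.
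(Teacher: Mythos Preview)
Your Step~3 contains a gap that is not the ``main obstacle'' you identify. Even if, for a well-chosen $b_1$, the linear coefficient $\mathrm{tr}(b_0^{-1}b_1b_0^{-1}a)$ represents a nonzero class in $D/[D,D]$, the map $\lambda \mapsto \mathrm{tr}(c)\pmod{[D,D]}$ is a rational function from $Z$ into the $Z$-module $D/[D,D]$, and a nonconstant rational function on an infinite field need not hit a prescribed value: already over $Z=\mathbb{R}$ with $D/[D,D]\cong Z$ (the central-simple case) the polynomial $1+\lambda+\lambda^2$ has no zero. Surjectivity of the first-order map $b_1 \mapsto (\text{linear coefficient})$ only says this curve has nonzero velocity at $\lambda=0$; it does not force the curve through $0 \in D/[D,D]$. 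So fixing $b_1$ and solving in $\lambda$ does not work, and if you let $b_1$ vary as well the power-series expansion is lost and the higher-order terms depend on $b_1$ in an uncontrolled way. A genuinely new idea is needed to close this step.

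A secondary concern: the characterization in Step~1 (that $m\in M_n(D)$ is a single commutator iff $\mathrm{tr}(m)\in[D,D]$) is proved in Amitsur--Rowen only under the standing hypothesis that $D$ is finite-dimensional over its center, so for general division rings you should either locate a reference or supply an argument. The paper avoids both difficulties by a different route: it settles $n=2$ with explicit $2\times2$ factorizations, then inducts on $n$ by using the Amitsur--Rowen lemma (which does hold without the finite-dimensionality hypothesis) that any noncentral matrix is similar to one with zero $(1,1)$-entry, peels off a $1\times1$ block, and invokes Cohn's spectral theory over skew fields to choose a central scalar $\lambda$ with $v-\lambda$ invertible---this is exactly where infiniteness of the center enters. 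No global trace characterization of commutators is required; the only input of that flavor is the elementary fact, verified by hand, that every matrix $\begin{psmallmatrix} r & s \\ t & -r \end{psmallmatrix}\in M_2(D)$ is a commutator.
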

 
%==========================================================================================
The assumption that the center is infinite  is unnecessary if either $n=2$ (\autoref{prp:ProdCommutatorsSize2}) or if $a$ is singular (\autoref{prp:SingularMatrix}). 
Its necessity in general is left open.

The final \autoref{sec:FixedElement} studies a variation of the problem from the preceding sections: 
We consider only commutators with a fixed element (that is, values of an inner derivation), but allow for sums of their products rather than only products. 
The following is \autoref{tder}.

%==========================================================================================
\begin{thmIntro}
Let $F$ be a field, let $n\ge 2$, and let $a\in M_n(F)$. 
Then every matrix in $M_n(F)$ is   a sum of matrices of the form $[a,x][a,y]$ with $x,y\in M_n(F)$ if and only if the degree of the minimal polynomial of $a$ is greater than $2$.
\end{thmIntro}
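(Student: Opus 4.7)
My plan is to prove the two implications separately.

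For necessity, suppose $\deg(\min a)\le 2$. If the degree is $1$ then $a$ is scalar and $[a,x]=0$ for every $x$, so every sum is $0$. If the degree is $2$, write $a^2=\alpha a+\beta$ with $\alpha,\beta\in F$. Expanding $a[a,x][a,y]-[a,x][a,y]a$ and replacing each occurrence of $a^2$ by $\alpha a+\beta$ yields complete cancellation, so every product $[a,x][a,y]$ commutes with $a$. Since $a$ is not scalar, $C(a)$ is a proper subspace of $M_n(F)$, and no matrix outside $C(a)$ is realisable as such a sum.

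For sufficiency, assume $1,a,a^2$ are $F$-linearly independent, and let $V$ denote the $F$-span of $\{[a,x][a,y]:x,y\in M_n(F)\}$. The identity $[a,xp(a)]=[a,x]p(a)$ for $p\in F[t]$ shows that $V$ is an $F[a]$-sub-bimodule of $M_n(F)$; moreover, formation of $V$ commutes with scalar extension and with conjugation of $a$, so I may assume $F$ is algebraically closed and $a$ is in Jordan canonical form. Via the nondegenerate trace pairing, $V=M_n(F)$ is equivalent to $V^\perp=0$, where
\[
V^\perp=\{\,w\in M_n(F):w[a,x]\in C(a)\text{ for every }x\in M_n(F)\,\};
\]
this reformulation follows from the identity $\mathrm{tr}(w[a,x][a,y])=-\mathrm{tr}([a,w[a,x]]\,y)$ together with the criterion $z\in C(a)\iff\mathrm{tr}([a,z]y)=0$ for all $y$.

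The main step is a case analysis based on the Jordan structure of $a$. If $a$ has at least three distinct eigenvalues, then in the generalized-eigenspace block decomposition of $M_n(F)$ the restriction of $[a,\cdot]$ to the $(i,j)$-block with $\lambda_i\ne\lambda_j$ is invertible, and products $[a,x_{ij}][a,x_{jk}]$ chosen with a third eigenvalue $\lambda_j\notin\{\lambda_i,\lambda_k\}$ produce every block entry, forcing $V=M_n(F)$. If $a$ has at most two distinct eigenvalues, then $\deg(\min a)\ge3$ forces either a Jordan block of size at least three, or a block of size two paired with a second eigenvalue; the representative subcase is $a=J_n(0)$ with $n\ge3$, for which explicit products such as $(e_{11}-e_{22})(e_{21}-e_{32})=-e_{21}$ and $(e_{22}-e_{33})^{2}=e_{22}+e_{33}$, combined with the $F[a]$-bimodule structure of $V$, extract every matrix unit. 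The main obstacle will be organising the single-Jordan-block case uniformly in $n$; a clean route is first to produce enough diagonal differences $e_{ii}-e_{jj}$ and subdiagonal units $e_{i+1,i}$ inside $V$, after which bimodule closure finishes the argument.
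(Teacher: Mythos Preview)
Your necessity argument is correct and slightly cleaner than the paper's: you show that if $a^2=\alpha a+\beta$ then every product $[a,x][a,y]$ lies in the centralizer $C(a)$, whereas the paper passes to the algebraic closure, factors the minimal polynomial as $(t-\lambda)(t-\mu)$, and shows $(a-\mu)[a,x][a,y](a-\lambda)=0$. Both obstructions work.

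Your sufficiency strategy, however, is fundamentally different from the paper's and contains a genuine gap. The paper avoids Jordan forms entirely: it proves a derivation identity
\[
xD(b)D(c)z = D(xb)D(cz) - D(x)D(bcz) \quad\text{whenever } D(b)c=0,
\]
and then constructs, from any vector $v$ with $\{v,av,a^2v\}$ independent, a rank-one $b$ satisfying $D(b)b=0$ and $D(b)^2\neq 0$. Simplicity of $M_n(F)$ then finishes immediately.

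Your case analysis, by contrast, is incomplete, and the part you do spell out is incorrect. For $a=J_n(0)$ with $n\geq 3$, you propose to place the diagonal differences $e_{ii}-e_{jj}$ and the first-subdiagonal units $e_{i+1,i}$ in $V$ and then invoke $F[a]$-bimodule closure. But for $a=J_n(0)$ one has $a^ke_{pq}a^l=e_{p-k,\,q+l}$, so left and right multiplication by powers of $a$ can only \emph{decrease} the row index and \emph{increase} the column index. Hence the $F[a]$-bimodule generated by $\{e_{ii}-e_{jj}\}\cup\{e_{i+1,i}\}$ is contained in the span of $\{e_{pq}:p\leq q+1\}$, and you never reach $e_{31}$, $e_{41}$, etc. The route can be repaired (for instance $[a,e_{i+k,i}][a,e_{i+2,i}]=-e_{i+k,i}$ gives the lower-triangular units directly), but the stated plan does not work. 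In addition, you only treat the single-block case $a=J_n(0)$; the remaining configurations with at most two eigenvalues (several nilpotent blocks, or $J_m(\lambda)\oplus a'$ with a second eigenvalue) still need an argument, and your $V^\perp$ description, while correct, is never used.
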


%==========================================================================================
%==========================================================================================
\section{Matrices that are not products of commutators}
\label{sec:NotProduct}

%==========================================================================================
In this section, we exhibit an example of a commutative, unital ring such that not every $2$-by-$2$ matrix over this ring is a product of (finitely many) commutators;
see \autoref{prp:NotProdComm}.

\medskip

%==========================================================================================
Let $C$ be a commutative algebra over a field $F$.
We denote by $\slMath_2(C) \subseteq M_2(C)$ the space of trace zero matrices. 
Note that the commutator of any two matrices from $M_2(C)$ belongs to $\slMath_2(C)$.
For the case $C=F$, Albert and Muckenhoupt \cite{AlbMuc57MatricesTraceZero} (extending earlier work of Shoda \cite[Satz~3]{Sho37SaetzeMatrizen} in characteristic $0$) showed that the converse also holds, that is, a matrix over a field is a commutator if and only if it has trace zero.

In the following result, we consider the case where $C=F$.

%==========================================================================================
\begin{lma}
\label{le}
Let $s_1,s_2,\dots,s_n\in \slMath_2(F)$ satisfy
\[
s_1s_2\cdots s_n=0. 
\]
Let $t_{1k},t_{2k},t_{3k},t_{4k}\in \slMath_2(F)$, for $k=1,\dots,n$, be any trace zero matrices, and set
\begin{align*}
\label{f1}    
r_1&=t_{11}s_{2}\cdots s_{n} + s_{1}t_{12}s_3\cdots s_{n} + \dots + s_{1}\cdots s_{n-1}t_{1n},\\
r_2&=t_{21}s_{2}\cdots s_{n} + s_{1}t_{22}s_3\cdots s_{n} + \dots + s_{1}\cdots s_{n-1}t_{2n},\\
 r_3&=t_{31}s_{2}\cdots s_{n} + s_{1}t_{32}s_3\cdots s_{n} + \dots + s_{1}\cdots s_{n-1}t_{3n},\\   r_4&=t_{41}s_{2}\cdots s_{n} + s_{1}t_{42}s_3\cdots s_{n} + \dots + s_{1}\cdots s_{n-1}t_{4n}.
\end{align*}
Then $\{r_1, r_2, r_3, r_4\}\subseteq M_2(F)$ is a linearly dependent set over $F$.\end{lma}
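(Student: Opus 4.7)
The plan is to establish the stronger fact that the image of the $F$-linear map
\[
\mathcal{T}\colon (\slMath_2(F))^n \to M_2(F), \quad (t_1, \ldots, t_n) \mapsto \sum_{k=1}^n L_k\, t_k\, R_k,
\]
with $L_k = s_1 \cdots s_{k-1}$ and $R_k = s_{k+1} \cdots s_n$ (empty-product convention $L_1 = R_n = I$), has dimension at most $3$ inside the four-dimensional space $M_2(F)$. Since each $r_i$ lies in this image, any four such elements must be linearly dependent. I would proceed by strong induction on $n$, splitting into three cases.

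\emph{Case A: some $s_k = 0$.} Then every $L_j$ with $j > k$ and every $R_j$ with $j < k$ contains $s_k$ as a factor, so only the $k$-th summand of $\mathcal{T}$ survives. This gives $\mathrm{im}(\mathcal{T}) \subseteq L_k\, \slMath_2(F)\, R_k$, of dimension at most $3$, and in particular covers the base case $n = 1$.

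\emph{Case B: all $s_k \ne 0$, and $s_1$ is invertible (the case $s_n$ invertible is symmetric).} From $s_1 R_1 = s_1 \cdots s_n = 0$ and invertibility, $R_1 = s_2 \cdots s_n = 0$; the first summand vanishes and the rest factor as $\mathcal{T}(t_1, \ldots, t_n) = s_1 \cdot \mathcal{T}'(t_2, \ldots, t_n)$, where $\mathcal{T}'$ is the analogous map for the $(n-1)$-tuple $(s_2, \ldots, s_n)$, whose product is now zero. By the inductive hypothesis $\dim \mathrm{im}(\mathcal{T}') \le 3$, and multiplication by the invertible $s_1$ preserves dimension.

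\emph{Case C: all $s_k \ne 0$, and both $s_1, s_n$ have rank $1$.} By Cayley--Hamilton, $s_1^2 = s_n^2 = 0$, so $\ker(s_1) = \mathrm{im}(s_1)$ and $\ker(s_n) = \mathrm{im}(s_n)$, each a $1$-dimensional subspace of $F^2$. Pick nonzero $v_1 \in \mathrm{im}(s_1)$ and $v_n \in \ker(s_n)$. For $k < n$ we have $R_k v_n = (s_{k+1} \cdots s_{n-1})(s_n v_n) = 0$, while $R_n v_n = v_n$. Hence for every $r = \sum_k L_k t_k R_k$,
\[
r v_n = L_n t_n v_n \in \mathrm{im}(L_n) \subseteq \mathrm{im}(s_1) = F v_1,
\]
because $L_n = s_1 \cdot (s_2 \cdots s_{n-1})$. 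Thus $\mathrm{im}(\mathcal{T})$ lies inside $\{r \in M_2(F) : r v_n \in F v_1\}$, which is cut out by a single nonzero linear equation on $M_2(F)$ and is therefore $3$-dimensional.

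The main obstacle is Case C, which admits no obvious inductive reduction. The decisive trick is that when $s_n$ is nilpotent of rank $1$, the coincidence $\ker(s_n) = \mathrm{im}(s_n)$ supplies a nonzero vector $v_n$ annihilated by $s_n$, and hence by every $R_k$ with $k < n$; the one surviving term $L_n t_n v_n$ lands automatically in $\mathrm{im}(s_1)$ because $s_1$ is the leftmost factor of $L_n$.
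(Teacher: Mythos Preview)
Your proof is correct and follows the same inductive skeleton as the paper: reduce when $s_1$ or $s_n$ is invertible, and in the remaining case use that a nonzero singular element of $\slMath_2(F)$ squares to zero. The difference lies only in how the singular case is finished. The paper computes $s_1 r_i s_n = 0$ for each $r_i$ and argues by contradiction: if the $r_i$ were independent they would span $M_2(F)$, forcing $s_1 M_2(F) s_n = 0$ and hence $s_1 = 0$ or $s_n = 0$, each of which is then handled separately. You instead evaluate at a vector $v_n$ spanning $\ker(s_n) = \mathrm{im}(s_n)$ and directly exhibit the $3$-dimensional subspace $\{r : r v_n \in \mathrm{im}(s_1)\}$ containing $\mathrm{im}(\mathcal{T})$, while dispatching the case of a vanishing $s_k$ upfront. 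The two constraints coincide (for nonzero rank-one $s_1, s_n$, one has $s_1 r s_n = 0$ if and only if $r v_n \in \mathrm{im}(s_1)$), so the arguments are close variants; your packaging via the single linear map $\mathcal{T}$ simply avoids the detour through contradiction.
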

\begin{proof}
Set $\mathcal{R}=\{r_1, r_2, r_3, r_4\}$.
The proof is by induction on $n$. If $n=1$, then $r_i=t_{i1}$ for all $i=1,\dots,4$. 
Thus $\mathcal{R} \subseteq \slMath_2(F)$ must be linearly dependent since $\dim_F(\slMath_2(F))=3$. 

We may thus assume that the lemma is true for all positive integers less than~$n$. 
If $s_1$ is invertible, then $s_2\cdots s_n=0$
and hence the induction hypothesis implies that $s_1^{-1}\mathcal{R}$ is linearly dependent, so $\mathcal{R}$ is linearly dependent too. We may therefore assume that $s_1$ is not invertible, and, analogously, we may  assume that $s_n$ is not invertible. 

Being 2$\times$2 matrices with trace zero with zero determinant, $s_1$ and $s_n$ have square zero, which implies that $s_1r_is_n= 0$ for all $i=1,\dots, 4$. 
If $\mathcal{R}$ was linearly independent, then it would follow that $s_1M_2(F)s_n= \{0\}$, which is possible only if $s_1=0$ or $s_n=0$. 
Assume that $s_1=0$. Then $r_k=t_{k1}s_2\cdots s_n$ for $k=1,2,3,4$.
Set $x=s_2\cdots s_n$. Then $r_1,r_2,r_3,r_4\in \mathrm{sl}_2(F)x$,
which is at most three-dimensional. Thus $\mathcal{R}$ is linearly
dependent, which is a contradiction. 
The case $s_n=0$ is analogous, and in either case we deduce that
$\mathcal{R}$ is linearly dependent.
\end{proof}

%==========================================================================================
Given a nonunital $F$-algebra $B$, recall that its (minimal) unitization is the $F$-algebra $C=B\oplus F$ with product given by $(a,\lambda)(b,\mu)=(\mu a + \lambda b + ab, \lambda\mu)$ for all $a,b\in B$ and all $\lambda,\mu\in F$. 

%==========================================================================================
\begin{thm}
\label{prp:NotProdComm}
Let $C_0$ be a 4-dimensional $F$-algebra with zero multiplication, and let $\{c_1,c_2,c_3,c_4\}$ be a basis for $C_0$. 
Let $C$ be the unitization of $C_0$. 
Then the matrix
$a=\left[\begin{smallmatrix} c_1 & c_2\cr c_3 & c_4
\cr
\end{smallmatrix} \right]\in M_2(C)$
cannot be written as a product of elements in $\slMath_2(C)$. 
In particular, $a$ cannot be written as a product of commutators in $M_2(C)$. 
\end{thm}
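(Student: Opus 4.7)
The plan is to argue by contradiction, exploiting the direct sum decomposition $C = F \oplus C_0$ together with the hypothesis $C_0 \cdot C_0 = 0$. Suppose $a = s_1 s_2 \cdots s_n$ with each $s_i \in \slMath_2(C)$.

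First, I would write $s_i = \bar{s}_i + t_i$, where $\bar{s}_i \in M_2(F)$ and $t_i \in M_2(C_0)$ are the projections of $s_i$ along the splitting $M_2(C) = M_2(F) \oplus M_2(C_0)$. Since $\operatorname{tr}(s_i) = \operatorname{tr}(\bar{s}_i) + \operatorname{tr}(t_i)$ with the two summands lying in the complementary subspaces $F$ and $C_0$ of $C$, the trace-zero condition forces both $\bar{s}_i \in \slMath_2(F)$ and $t_i \in \slMath_2(C_0)$.

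Next, I would expand the product $\prod_i (\bar{s}_i + t_i)$ and observe that every monomial containing two or more factors from $M_2(C_0)$ vanishes, since $C_0 \cdot C_0 = 0$. Only the all-$\bar{s}$ term and the terms with exactly one $t$-factor survive, yielding
\begin{equation*}
a = \bar{s}_1 \bar{s}_2 \cdots \bar{s}_n + \sum_{k=1}^n \bar{s}_1 \cdots \bar{s}_{k-1}\, t_k\, \bar{s}_{k+1} \cdots \bar{s}_n.
\end{equation*}
The first summand lies in $M_2(F)$ and the sum lies in $M_2(C_0)$. Since $a \in M_2(C_0)$, matching components against the direct sum decomposition yields two conclusions: (a) $\bar{s}_1 \cdots \bar{s}_n = 0$, which is exactly the hypothesis of \autoref{le}; and (b) the remaining sum equals $a$.

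Finally, I would expand $t_k = \sum_{i=1}^4 t_{ik} c_i$ with each $t_{ik} \in M_2(F)$; trace-zero of $t_k$ together with the $F$-linear independence of $c_1, c_2, c_3, c_4$ forces $t_{ik} \in \slMath_2(F)$. Because the $c_i$ are central in $C$, collecting the coefficients of $c_i$ in the identity from (b) gives $a = \sum_{i=1}^4 r_i c_i$, where the $r_i$ are precisely the matrices defined in \autoref{le} (with the present $\bar{s}_k$ playing the role of the $s_k$ there). Comparing with $a = E_{11} c_1 + E_{12} c_2 + E_{21} c_3 + E_{22} c_4$ and using the uniqueness of the representation in $M_2(C_0) \cong M_2(F) \otimes_F C_0$ identifies $r_1, r_2, r_3, r_4$ with the four standard matrix units of $M_2(F)$, which are linearly independent. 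This directly contradicts the conclusion of \autoref{le}. The only real technical ingredient is the lemma itself; once it is available, the argument is a careful bookkeeping of the bilinear structure of $M_2(C)$, and the only subtle point is verifying that trace-zero is preserved by both projections.
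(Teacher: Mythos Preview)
Your proposal is correct and follows essentially the same approach as the paper's proof: decompose each factor along $C = F \oplus C_0$, use $C_0^2=0$ to isolate the constant and linear-in-$c_i$ terms, and then invoke \autoref{le} to contradict the linear independence of the matrix units. The only cosmetic difference is that you first split into $M_2(F)$ and $M_2(C_0)$ components and then further decompose the latter along the basis $\{c_1,c_2,c_3,c_4\}$, whereas the paper performs the full five-term decomposition in one step.
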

\begin{proof}
Arguing by contradiction, suppose that there exist $t_1,t_2,\ldots, t_n\in \slMath_2(C)$ such that
\[
a= t_1t_2\cdots t_n.
\]
For each $k=1,\ldots,n$, there are $t_{1k},\ldots, t_{4k}\in M_2(F)$ and $t_{0k} \in F$ such that
\[
t_k=t_{0k}+c_1t_{1k} + c_2t_{2k}+c_3t_{3k}+c_4t_{4k}.
\]
Applying the trace $\tau$ of $M_2(C)$ to the identity above, and using
that $t_k\in \slMath_2(C)$ yields the identity
\[0=t_{0k}+c_1\tau(t_{1k}) + c_2\tau(t_{2k})+c_3\tau(t_{3k})+c_4\tau(t_{4k})\]
in $C$. Since $\{1,c_1,c_2,c_3,c_4\}$ is a linearly independent set 
in $C$, it follows that each $t_{ik}$ belongs to $\slMath_2(F)$. 
Moreover, $a= t_1t_2\cdots t_n$ implies that
\[
t_{01}t_{02}\cdots t_{0n}=0.
\]
For $i,j=1,2$, let $e_{ij}\in M_2(C)$ be the corresponding matrix unit. Writing each matrix $t_j$ in the basis $\{1,c_1,c_2,c_3,c_4\}$
and using that $c_ic_j=0$, the identity
$a= t_1t_2\cdots t_n$ can be seen to imply
\begin{align*}
e_{11}=&t_{11}t_{02}\cdots t_{0n} + t_{01}t_{12}t_{03}\cdots t_{0n} + \dots + t_{01}\cdots t_{0\,n-1} t_{1n}, \\
e_{12}=&t_{21}t_{02}\cdots t_{0n} + t_{01}t_{22}t_{03}\cdots t_{0n} + \dots + t_{01}\cdots t_{0\,n-1}t_{2n}, \\ 
e_{21}=  &t_{31}t_{02}\cdots t_{0n} + t_{01}t_{32}t_{03}\cdots t_{0n} + \dots + t_{01}\cdots t_{0\,n-1} t_{3n}, \\e_{22}= 
&t_{41}t_{02}\cdots t_{0n} + t_{01}t_{42}t_{03}\cdots t_{0n} + \dots + t_{01}\cdots t_{0\,n-1} t_{4n}.
\end{align*}
As the set $\{e_{11}, e_{12}, e_{21}, e_{22}\}$ is linearly independent in $M_2(F)$, this contradicts \autoref{le}. 
Therefore the matrices $t_1,\ldots,t_n$ do not exist, as desired.
\end{proof}

%==========================================================================================
%==========================================================================================
\section{Matrices over algebras with Bass stable rank one}
\label{sec:Bsr}

%==========================================================================================
Given an algebra $A$ over an infinite field and $n\geq 3$, we show that every triangular $n$-by-$n$ matrix over $A$ is a product of two matrices with zero diagonal (\autoref{prp:TriangularFactorizes}), and hence a product of two commutators;
see \autoref{prp:TriangularProductCommutators}.
As an application, we show that every element in a von Neumann algebra of type~$\mathrm{I}_n$ is a product of two commutators;
see \autoref{prp:VNA}.

If $A$ has Bass stable rank one, we deduce that every invertible matrix over $A$ is a product of three commutators;
see \autoref{prp:ProdCommSR1}.
If $A$ is a right K-Hermite ring with Bass stable rank one, then every matrix over $A$ is a product of three commutators;
see \autoref{prp:ProdCommHermiteSR1}.

%==========================================================================================
\begin{prp}
\label{prp:TriangularFactorizes}
Let $R$ be a unital ring, and let $n \geq 3$.
Then every upper triangular matrix in $M_n(R)$ is the product of two matrices with zero diagonals.
More precisely, if $a = (a_{j,k})_{j,k} \in M_n(R)$ is upper triangular, then $a=bc$ for the matrices $b=(b_{j,k})_{j,k} \in M_n(R)$ and $c=(c_{j,k})_{j,k} \in M_n(R)$ given by
\[
b_{j,k} = \begin{cases}
a_{j,n}, & \text{if } j\geq 2, k=1 \\
1, & \text{if } j=1, k=2 \\
a_{j,k-1}, & \text{if } k\geq 3 \\
0, & \text{else}
\end{cases},
\]
and all entries of $c$ zero except
\[
c_{2,1}=a_{1,1}, \quad
c_{2,n}=a_{1,n}, \quad
c_{1,n}=c_{3,2}=c_{4,3}=\ldots=c_{n,n-1}=1.
\]

Similarly, every lower triangular matrix in $M_n(R)$ is the product of two matrices with zero diagonals.
\end{prp}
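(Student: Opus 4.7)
The claim prescribes explicit matrices $b$ and $c$, so the proof is a direct verification: one checks that both have zero diagonal and that $bc=a$. I would organize the work accordingly.

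For the diagonals, the entries $b_{1,1}$ and $b_{2,2}$ fall into the ``else'' case of the definition of $b$, and for $j\ge 3$ the formula gives $b_{j,j}=a_{j,j-1}$, which vanishes by upper triangularity of $a$. For $c$, the only nonzero entries are at positions $(2,1)$, $(2,n)$, $(1,n)$, and $(j,j-1)$ for $3\le j\le n$; the hypothesis $n\ge 3$ guarantees that none of these sits on the diagonal (in particular $(2,1)$ and $(2,n)$ are off-diagonal precisely because $n\ne 2$).

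For the product, I would compute $(bc)_{i,j}=\sum_{k=1}^n b_{i,k}c_{k,j}$ with a case split on the row index $i$. When $i=1$, only $b_{1,2}=1$ and $b_{1,k}=a_{1,k-1}$ for $k\ge 3$ are nonzero, so $(bc)_{1,j}=c_{2,j}+\sum_{k=3}^n a_{1,k-1}c_{k,j}$; using that each column of $c$ carries its ``chain'' entry $c_{k+1,k}=1$ (with column $1$ additionally carrying $c_{2,1}=a_{1,1}$ and column $n$ additionally carrying $c_{2,n}=a_{1,n}$), the right-hand side collapses to $a_{1,j}$ after separately treating $j=1$, $j=2$, $3\le j\le n-1$, and $j=n$. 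When $i\ge 2$, the nonzero entries of row $i$ of $b$ are $b_{i,1}=a_{i,n}$ and $b_{i,k}=a_{i,k-1}$ for $k\ge 3$; the same column-by-column analysis of $c$, combined with upper triangularity of $a$ to handle columns $1$ and $2$, again yields $(bc)_{i,j}=a_{i,j}$.

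For the lower triangular statement, I would not redo the computation but conjugate by the reversal matrix $J$ with ones on the anti-diagonal. Since $J=J^{-1}$, the map $x\mapsto JxJ$ preserves having zero diagonal (as $(JxJ)_{i,i}=x_{n+1-i,n+1-i}$) and carries lower triangular matrices to upper triangular ones. Applying the already proved upper triangular case to $JaJ=b'c'$ and conjugating back gives $a=(Jb'J)(Jc'J)$, a factorization of the required form. The only genuine obstacle in the whole argument is bookkeeping: tracking, for each pair $(i,j)$, which of the two or three nonzero entries in row $i$ of $b$ is matched with a nonzero entry in column $j$ of $c$, and verifying that the resulting single term is exactly $a_{i,j}$.
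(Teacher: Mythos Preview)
Your proposal is correct and matches the paper's approach: the paper simply states that the upper triangular case ``is proved by executing a matrix multiplication'' and then exhibits the factorizations for $n=3$ and $n=4$, so your organized entry-by-entry verification is exactly what is intended (and is more detailed than what the paper writes out). One small remark: in your row $i\ge 2$ analysis, upper triangularity of $a$ is genuinely needed only for column $j=1$ (to match $(bc)_{i,1}=0$ with $a_{i,1}=0$); for column $j=2$ the product already gives $a_{i,2}$ on the nose.

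For the lower triangular part your route differs slightly: the paper just says ``shown analogously'' (i.e., write down a transposed explicit factorization), whereas you deduce it by conjugating with the reversal permutation $J$. Your argument is correct since $(JxJ)_{i,i}=x_{n+1-i,\,n+1-i}$ preserves the zero-diagonal condition and $J$ swaps upper and lower triangular matrices; this is a clean way to avoid repeating the computation, at the cost of not producing the explicit lower-triangular factorization that the paper implicitly claims.
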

\begin{proof}
The result for upper triangular matrices is proved by executing a matrix multiplication, and the result for lower triangular matrices is shown analogously.
We omit the details and instead indicate the factorizations for the cases $n = 3$ and $n = 4$.

%For $n=3$, let $a_{jk} \in R$ for $j,k=1,\ldots,3$. 
In $M_3(R)$, we have:
\begin{align*}
\begin{pmatrix}
a_{11} & a_{12} & a_{13} \\
0 & a_{22} & a_{23} \\
0 & 0 & a_{33} \\
\end{pmatrix}
= \begin{pmatrix}
0 & 1 & a_{12} \\
a_{23} & 0 & a_{22} \\
a_{33} & 0 & 0
\end{pmatrix}
\begin{pmatrix}
0 & 0 & 1 \\
a_{11} & 0 & a_{13} \\
0 & 1 & 0
\end{pmatrix}.
\end{align*}

In $M_4(R)$, we have
\begin{align*}
\begin{pmatrix}
a_{11} & a_{12} & a_{13} & a_{14} \\
0 & a_{22} & a_{23} & a_{24} \\
0 & 0 & a_{33} & a_{34} \\
0 & 0 & 0 & a_{44}
\end{pmatrix}
= \begin{pmatrix}
0 & 1 & a_{12} & a_{13} \\
a_{24} & 0 & a_{22} & a_{23} \\
a_{34} & 0 & 0 & a_{33} \\
a_{44} & 0 & 0 & 0
\end{pmatrix}
\begin{pmatrix}
0 & 0 & 0 & 1 \\
a_{11} & 0 & 0 & a_{14} \\
0 & 1 & 0 & 0 \\
0 & 0 & 1 & 0
\end{pmatrix}.
\end{align*}

\end{proof}

%==========================================================================================
The next result is well known, but we could not locate a precise reference.

%==========================================================================================
\begin{lma}
\label{prp:ZeroDiagonal}
Let $n \geq 2$, and let $R$ be a unital ring containinig central elements $a_1,\ldots,a_n \in R$ such that the pairwise differences $a_j-a_k$ for $j \neq k$ are invertible in~$R$.
Then every $n$-by-$n$ matrix with zero diagonal is a commutator in $M_n(R)$.
\end{lma}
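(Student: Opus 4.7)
The plan is to exhibit the commutator explicitly, using the diagonal matrix $d = \mathrm{diag}(a_1,\ldots,a_n) \in M_n(R)$ as one of the factors. The key observation is that conjugation-style operations by $d$ act diagonally on matrix entries, precisely because the $a_j$ are central.

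More concretely, given $y = (y_{j,k})_{j,k} \in M_n(R)$ with zero diagonal, I would compute the $(j,k)$-entry of $[d,x]$ for an arbitrary $x = (x_{j,k})_{j,k}$. Since $d$ is diagonal with central entries, one gets
\[
[d,x]_{j,k} \;=\; a_j x_{j,k} - x_{j,k} a_k \;=\; (a_j - a_k) x_{j,k},
\]
where in the last equality we used that $a_k$ is central in $R$. In particular, the diagonal entries of $[d,x]$ vanish automatically, and for $j\neq k$ the factor $a_j - a_k$ is invertible by hypothesis.

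Hence I define $x = (x_{j,k})_{j,k} \in M_n(R)$ by setting $x_{j,j} = 0$ for each $j$ and
\[
x_{j,k} \;=\; (a_j-a_k)^{-1} y_{j,k} \quad \text{for } j \neq k.
\]
Then entrywise $[d,x] = y$, so $y$ is a commutator. There is essentially no obstacle here; the only point worth flagging is that the argument depends on the $a_j$ being central, which is exactly what allows the action of $\operatorname{ad}(d)$ on $M_n(R)$ to decouple into independent scalar multiplications on each off-diagonal entry.
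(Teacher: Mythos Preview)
Your proof is correct and essentially identical to the paper's own argument: both take the diagonal matrix with entries $a_1,\ldots,a_n$, compute that $[d,x]_{j,k}=(a_j-a_k)x_{j,k}$ using centrality, and then invert the off-diagonal factors to solve for $x$. The only cosmetic difference is notation.
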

\begin{proof}
Consider the diagonal matrix $a$ with diagonal entries $a_1,\ldots,a_n$.
Given a matrix $b=(b_{jk})_{j,k} \in M_n(R)$, the commutator $[a,b]$ is the matrix $(c_{jk})_{j,k}$ with entries $c_{jk}=(a_j-a_k)b_{jk}$ for $j,k=1,\ldots,n$.
We illustrate the case $n=3$:
%The next computation illustrates the commutator of a diagonal matrix with central entries and a general matrix:
\begin{align*}
&\left[
\begin{pmatrix}
a_{1} & 0 & 0 \\
0 & a_{2} & 0 \\
0 & 0 & a_{3} \\
%0 & 0 & 0 & a_{4}
\end{pmatrix},
\begin{pmatrix}
b_{11} & b_{12} & b_{13} \\
b_{21} & b_{22} & b_{23} \\
b_{31} & b_{32} & b_{33} \\
%b_{41} & b_{42} & b_{43} & b_{44}
\end{pmatrix}
\right] \\
&\qquad\qquad\qquad= \begin{pmatrix}
0 & (a_1-a_2)b_{12} & (a_1-a_3)b_{13} \\
(a_2-a_1)b_{21} & 0 & (a_2-a_3)b_{23} \\
(a_3-a_1)b_{31} & (a_3-a_2)b_{32} & 0
\end{pmatrix}.
\end{align*}

Now, given a matrix $c = (c_{jk})_{j,k} \in M_n(R)$ with zero diagonal, consider the matrix $b$ with entries $b_{jj} = 0$ for $j=1,\ldots,n$ and $b_{jk}:=(a_j-a_k)^{-1}c_{jk}$ for $j \neq k$.
Then $c=[a,b]$.
\end{proof}

%==========================================================================================
\begin{thm}
\label{prp:TriangularProductCommutators}
Let $A$ be a unital algebra over an infinite field, and let $n \geq 3$.
Then every upper (lower) triangular matrix in $M_n(A)$ is the product of two commutators.
\end{thm}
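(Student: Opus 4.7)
The plan is to combine the two preceding preparatory results in a straightforward way.

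First, I would take an upper triangular matrix $a \in M_n(A)$ and apply \autoref{prp:TriangularFactorizes} to factor it as $a = bc$, where $b, c \in M_n(A)$ both have zero diagonals. The lower triangular case follows by the analogous statement in that proposition.

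Next, I would produce the scalars needed to invoke \autoref{prp:ZeroDiagonal}. Since $A$ is a unital algebra over an infinite field $F$, I can choose $n$ pairwise distinct elements $\lambda_1, \ldots, \lambda_n \in F$, viewed as central elements of $A$ via the unital structure map. The pairwise differences $\lambda_j - \lambda_k$ for $j \neq k$ are nonzero scalars in $F$, hence invertible as elements of $A$. Thus the hypothesis of \autoref{prp:ZeroDiagonal} is satisfied, and both $b$ and $c$, having zero diagonals, are commutators in $M_n(A)$. Therefore $a = bc$ is a product of two commutators, as required.

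There is no real obstacle here: the content is packaged in \autoref{prp:TriangularFactorizes} and \autoref{prp:ZeroDiagonal}, and the only thing to check is that an infinite field supplies enough distinct scalars with invertible pairwise differences, which is automatic. The hypothesis $n \geq 3$ is inherited from \autoref{prp:TriangularFactorizes}, and the assumption that $F$ is infinite is what lets us choose $n$ distinct scalars regardless of how large $n$ is.
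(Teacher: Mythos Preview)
Your proposal is correct and follows essentially the same approach as the paper: factor the triangular matrix via \autoref{prp:TriangularFactorizes} into two zero-diagonal matrices, then use \autoref{prp:ZeroDiagonal} (with distinct scalars from the infinite field supplying the required central elements with invertible pairwise differences) to write each factor as a commutator. You have in fact spelled out the verification of the hypothesis of \autoref{prp:ZeroDiagonal} a bit more explicitly than the paper does.
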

\begin{proof}
By \autoref{prp:TriangularFactorizes}, every triangular matrix is the product of two matrices with zero diagonal.
Since $A$ is an algebra over an infinite field, the assumptions of \autoref{prp:ZeroDiagonal} are satisfied and it follows that every matrix over $A$ with zero diagonal is a commutator.
\end{proof}

%==========================================================================================
For a topological space $X$, we write $C(X)$ for the algebra of all continuous functions $X\to\mathbb{C}$ endowed with pointwise operations. Recall that a space $X$ is said to be \emph{extremally disconnected} (also called a \emph{Stonean} space), if the closure of every open set in $X$ is open (and hence clopen).

%==========================================================================================
\begin{exa}
\label{prp:VNA}
Let $n \geq 3$, and let $A$ be an $AW^*$-algebra of type $\mathrm{I}_n$ in the sense of \cite[Definition~18.2]{Ber72BearRgs}.
(This includes all von Neumann algebras of type $\mathrm{I}_n$, that is, von Neumann algebras such that every irreducible representation acts on a Hilbert space of dimension~$n$.)
We will argue that every element in $A$ is a product of two commutators. 

It is a standard fact in C*-algebra theory that there is an extremally disconnected compact Hausdorff space $X$ such that $A \cong M_n(C(X))$. Given $a \in M_n(C(X))$, by a result of Deckard and Pearcy \cite[Theorem~2]{DecPea63MatricesRgCtsFctsStonian} there exists a unitary $u \in M_n(C(X))$ such that $uau^*$ is upper triangular.
(A more conceptual proof of this result was given in \cite[Corollary~6]{Azo74BorelMeasurabilityLinAlg}.)
If $n \geq 3$, then it follows from \autoref{prp:TriangularProductCommutators} that $uau^*$ is a product of two commutators, and consequently so is $a$ itself.

The result also holds for $n=2$, and in fact for arbitrary von Neumann algebras of type $\mathrm{I}$, but the proof is more complicated since one needs to control the norm of the elements going into the commutators. This will appear in forthcoming work of the second and 
third named authors; see \cite{GarThi24pre:ProdCommutatorsVNA}.
\end{exa}

%==========================================================================================
We say that a matrix $(a_{jk})_{j,k} \in M_n(R)$ has \emph{zero trace} if $a_{11} + \ldots + a_{nn} = 0$.
The following result is well known;
see, for example, \cite[Theorem~4]{KauPas14CommutatorsMatrices}.

%==========================================================================================
\begin{thm}
\label{prp:TriangularTraceZero}
Let $R$ be a unital ring, and $n \geq 2$.
Then every triangular matrix in $M_n(R)$ with zero trace is a commutator.
\end{thm}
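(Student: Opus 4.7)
The plan is to exhibit an explicit $Y \in M_n(R)$ such that $T = [X, Y]$ for a fixed, simple $X$, chosen so that the commutator equation collapses to a tractable recursion. For upper triangular $T$, I would take $X := \sum_{i=1}^{n-1} e_{i, i+1}$, the superdiagonal shift. The key reason for this choice is that
\[ [X, Y]_{jk} = y_{j+1, k} - y_{j, k-1}, \]
with the boundary convention $y_{n+1, k} = y_{j, 0} = 0$. The equation $[X, Y] = T$ thereby becomes a first-order linear recursion in the row index for each column, which I would solve column by column.

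In column $1$, the recursion would give $y_{21} = t_{11}$ and $y_{j1} = 0$ for $j \geq 3$, with $y_{11}$ free (say $y_{11} := 0$). For columns $k = 2, \ldots, n$ in order, I would set $y_{1k} := 0$ and $y_{j+1, k} := t_{jk} + y_{j, k-1}$ for $j = 1, \ldots, n-1$. A short induction on $k$ then shows that $y_{j, k} = 0$ whenever $j \geq k + 2$, and that the subdiagonal entries satisfy $y_{k+1, k} = t_{11} + t_{22} + \cdots + t_{kk}$.

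At the last row of each column there is a consistency condition coming from the boundary: $-y_{n, k-1} = t_{nk}$. For $k < n$ this reads $0 = 0$ thanks to the previous vanishing statement and the upper triangularity of $T$. For $k = n$, however, it becomes
\[ -(t_{11} + t_{22} + \cdots + t_{n-1, n-1}) = t_{nn}, \]
which is precisely the hypothesis that $T$ has zero trace. I expect this to be the pivotal step of the argument, because trace zero is used there and only there.

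The lower triangular case is handled symmetrically, either by repeating the construction with $X$ replaced by $\sum_{i=1}^{n-1} e_{i+1, i}$, or by transposing and invoking $[X, Y]^T = [Y^T, X^T]$. Once the construction of $Y$ is on the table, the identity $T = [X, Y]$ should follow from a direct matrix multiplication. The main obstacle is not any single computational step but rather the organized bookkeeping needed to verify that the cascade of zero constraints below the subdiagonal is mutually compatible with the diagonal recursion, and that trace zero is invoked exactly where the compatibility demands it.
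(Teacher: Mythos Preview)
The paper does not actually prove this statement; it records it as well known and cites \cite{KauPas14CommutatorsMatrices} for a proof. Your argument is correct and is essentially the standard one: with $X$ the superdiagonal shift, the equation $[X,Y]=T$ reads $y_{j+1,k}-y_{j,k-1}=t_{jk}$, the recursion propagates zeros below the first subdiagonal by upper triangularity of $T$, the subdiagonal entries of $Y$ accumulate the partial diagonal sums of $T$, and the lone compatibility condition at position $(n,n)$ is exactly the trace-zero hypothesis. The lower triangular case follows by the transpose trick you indicate. There is nothing to add; your construction is the one the cited reference gives.
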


%==========================================================================================
A unital ring $R$ is said to have \emph{Bass stable rank one} if for all $a,b \in R$ such that $R = Ra+Rb$, there exists $c \in R$ such that $R = R(a+cb)$. 
In other words, whenever $a$ and $b$ generate $R$ as a left ideal, then there exists an element $c\in R$ such that $a+cb$ is left invertible.
For more details and an overview on the theory of Bass stable rank, we refer to \cite{Vas84BassSR, Che11BookSR}.

Two matrices $a,b \in M_n(R)$ over a unital ring $R$ are said to be \emph{similar} if $a = vbv^{-1}$ for some $v \in \GL_n(R)$.
In \cite{VasWhe90CommutatorsSR1}, Vaserstein and Wheland showed that every invertible matrix over a ring with Bass stable rank one is a product of three triangular matrices, and similar to a product of two triangular matrices.
Combined with \autoref{prp:TriangularProductCommutators}, one can immediately deduce that invertible matrices over suitable rings are products of four commutators.
Using a more refined argument, we show that products of three commutators suffice;
see \autoref{prp:ProdCommSR1}.

%==========================================================================================
\begin{lma}
\label{prp:FactorizationSR1}
Let $R$ be a unital ring of Bass stable rank one, let $n \geq 2$, and let $a \in \GL_n(R)$.
Then there exist $b,c \in \GL_n(R)$ such that $b$ is lower triangular, $c$ is upper triangular with all diagonal entries equal to $1$, and  $a$ is similar to $bc$.
\end{lma}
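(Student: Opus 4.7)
I would prove this by induction on $n$, with the case $n=1$ being trivial. The inductive step has three phases: (i) use the Bass stable rank one hypothesis to conjugate $a$ so that its $(1,1)$-entry becomes a unit in $R$; (ii) perform a block LU-type decomposition to separate the first row and column, producing an $(n-1) \times (n-1)$ Schur complement in $\GL_{n-1}(R)$; and (iii) apply the inductive hypothesis to the Schur complement and reassemble.

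For (i), since $a \in \GL_n(R)$, the first column $(a_{11}, \ldots, a_{n1})^T$ is left unimodular, i.e., $Ra_{11} + \cdots + Ra_{n1} = R$. Iterating the defining property of Bass stable rank one (absorbing one generator at a time into the accumulated sum), one obtains $c_2, \ldots, c_n \in R$ such that $\alpha := a_{11} + c_2 a_{21} + \cdots + c_n a_{n1}$ is a unit of $R$. With $v_1 := I + \sum_{i=2}^{n} c_i e_{1i} \in \GL_n(R)$ (whose inverse is $I - \sum_i c_i e_{1i}$, since the sum squares to zero), a direct computation shows that conjugation by $v_1$ leaves the entries $a_{j1}$ for $j \geq 2$ unchanged and replaces $a_{11}$ by $\alpha$. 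Replacing $a$ by $v_1 a v_1^{-1}$, we may therefore assume $a_{11}=\alpha \in R^{\times}$ and write $a = \left(\begin{smallmatrix}\alpha & v \\ w & A\end{smallmatrix}\right)$.

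For (ii), the standard Schur-complement identity yields
\[
a \,=\, \begin{pmatrix} \alpha & 0 \\ w & A' \end{pmatrix} \begin{pmatrix} 1 & \alpha^{-1} v \\ 0 & I_{n-1} \end{pmatrix}, \qquad A' := A - w \alpha^{-1} v,
\]
and one checks that $A' \in \GL_{n-1}(R)$, since the lower-triangular left factor inherits invertibility from $a$ and $\alpha$. By the inductive hypothesis there exist $u \in \GL_{n-1}(R)$, a lower triangular $b_0 \in \GL_{n-1}(R)$, and a unit upper triangular $c_0 \in \GL_{n-1}(R)$ with $u A' u^{-1} = b_0 c_0$. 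Conjugating by $U := 1 \oplus u \in \GL_n(R)$ and using the elementary identity $\left(\begin{smallmatrix}\alpha & 0 \\ uw & b_0 c_0\end{smallmatrix}\right) = \left(\begin{smallmatrix}\alpha & 0 \\ uw & b_0\end{smallmatrix}\right)\left(\begin{smallmatrix}1 & 0 \\ 0 & c_0\end{smallmatrix}\right)$, one obtains
\[
U v_1 a v_1^{-1} U^{-1} \,=\, \begin{pmatrix} \alpha & 0 \\ uw & b_0 \end{pmatrix} \begin{pmatrix} 1 & \alpha^{-1} v u^{-1} \\ 0 & c_0 \end{pmatrix} \,=:\, b c,
\]
where $b$ is lower triangular with invertible diagonal entries $\alpha, (b_0)_{11}, \ldots, (b_0)_{n-1,n-1}$ (so $b \in \GL_n(R)$), and $c$ is upper triangular with every diagonal entry equal to $1$ (so also $c \in \GL_n(R)$). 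This completes the induction.

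The main subtlety is phase (i): the defining condition of Bass stable rank one concerns only two-generator left ideals, so one must verify that it iterates to reduce a unimodular column of length $n$ to a single unit via the very specific type of operation we need, namely one that perturbs the $(1,1)$-entry by a left combination of the other entries in the first column while leaving the rest of that column untouched. Once that reduction is in place, the remainder is routine block matrix algebra and the induction closes cleanly.
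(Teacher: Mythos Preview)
Your argument is correct, but it takes a genuinely different route from the paper. The paper's proof is a two-line reduction to a result of Vaserstein--Wheland: by \cite[Theorem~1]{VasWhe90CommutatorsSR1} one writes $a=xyz$ with $x,z$ lower triangular and $y$ upper triangular (arranging $y,z$ to have unit diagonals), and then observes that $a$ is similar to $zaz^{-1}=(zx)y$, so $b:=zx$ and $c:=y$ work. Your proof instead builds the factorization from scratch by induction on $n$, using stable rank one only to force the $(1,1)$-entry to be a unit after a conjugation, and then peeling off the first row and column via a Schur complement. What your approach buys is self-containment: you avoid the black-box citation and make transparent exactly where the stable-rank-one hypothesis enters. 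What the paper's approach buys is brevity, since the triangular factorization of \cite{VasWhe90CommutatorsSR1} already encodes the inductive work you are redoing. Your flagged subtlety in phase~(i) is real but standard: reducing a left-unimodular column of length $n$ to a single unit uses either the Bass--Vaserstein stability theorem ($SR_1\Rightarrow SR_m$) or the direct trick of writing $1=\sum r_i a_{i1}$ and applying the two-element condition to the pair $\bigl(a_{11},\,\sum_{i\ge 2} r_i a_{i1}\bigr)$; one also needs that left-invertible elements in a stable-rank-one ring are units, which is again classical.
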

\begin{proof}
By \cite[Theorem~1]{VasWhe90CommutatorsSR1}, there exist $x,y,z \in \GL_n(R)$ such that $a = xyz$, and such that~$x$ and~$z$ are lower triangular, and $y$ is upper triangular.
From the proof of \cite[Theorem~1]{VasWhe90CommutatorsSR1} we see that we can arrange that $y$ and $z$ have all diagonal entries equal to $1$.
Set $b := zx$ and $c := y$.
Then $b$ is lower triangular, and $c$ is upper triangular with all diagonal entries equal to $1$.
Further, $a$ is similar to the matrix $zaz^{-1} = (zx)y = bc$.
\end{proof}

%==========================================================================================
\begin{thm}
\label{prp:ProdCommSR1}
Let $A$ be a unital algebra over an infinite field, and assume that $A$ has Bass stable rank one.
Then for $n \geq 3$, every matrix in $\GL_n(A)$ is a product of three commutators.
\end{thm}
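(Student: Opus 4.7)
The plan is to use \autoref{prp:FactorizationSR1} to reduce to the case $a = bc$, with $b$ lower triangular and $c$ upper triangular with all diagonal entries equal to $1$. Since conjugation by an invertible matrix preserves commutators and products, and $a$ is similar to such a $bc$, it suffices to show that $bc$ itself is a product of three commutators.

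The central idea is to find an invertible \emph{diagonal} matrix $d \in M_n(A)$ such that $bd$ has zero trace. Indeed, once such a $d$ is produced, $bd$ is lower triangular with zero trace and hence a single commutator by \autoref{prp:TriangularTraceZero}, while $d^{-1}c$ is upper triangular and hence a product of two commutators by \autoref{prp:TriangularProductCommutators}. Combining these, $bc = (bd)(d^{-1}c)$ is a product of three commutators.

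To construct $d$, first observe that since $bc$ and $c$ are invertible, so is $b$; because $b$ is triangular, this forces each diagonal entry $b_{jj}$ to be invertible in $A$ (a standard fact about triangular matrices over rings). Using that $F$ is infinite and $n \geq 3$, I would pick $\lambda_1, \ldots, \lambda_n \in F^\times$ satisfying $\sum_{j=1}^n \lambda_j = 0$; for example, take $\lambda_1 = \cdots = \lambda_{n-2} = 1$ together with any $\lambda_{n-1}, \lambda_n \in F^\times$ with $\lambda_{n-1} + \lambda_n = -(n-2)$, which exist because $F$ is infinite. Setting $d = \operatorname{diag}(\lambda_1 b_{11}^{-1}, \ldots, \lambda_n b_{nn}^{-1})$ yields an invertible diagonal matrix, and one computes $(bd)_{jj} = \lambda_j$, so $\operatorname{tr}(bd) = \sum_j \lambda_j = 0$, as required.

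I do not foresee any real obstacle here: the argument is a clean combination of \autoref{prp:FactorizationSR1}, \autoref{prp:TriangularTraceZero}, and \autoref{prp:TriangularProductCommutators}, with only elementary verifications needed for the invertibility of the $b_{jj}$ and the existence of suitable scalars $\lambda_j$. It is worth noting that the Bass stable rank one hypothesis enters only through \autoref{prp:FactorizationSR1}; the remainder of the argument is purely algebraic.
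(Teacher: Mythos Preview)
Your approach is essentially the paper's: both use \autoref{prp:FactorizationSR1} to reduce to a similar product $bc$, insert an invertible diagonal matrix to split $bc$ as (trace-zero triangular)$\times$(triangular), and then invoke \autoref{prp:TriangularTraceZero} and \autoref{prp:TriangularProductCommutators}. The only difference is which factor carries the trace-zero condition: the paper sets $e=\operatorname{diag}(\lambda_1,\dots,\lambda_n)$ with $\sum_j\lambda_j=0$ and writes $bc=(be^{-1})(ec)$, so that $ec$ is the trace-zero factor (exploiting that $c$ already has unit diagonal), whereas you force $bd$ to be trace-zero.

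One caution: the assertion that an invertible lower-triangular matrix over a ring must have invertible diagonal entries is \emph{not} a ``standard fact about triangular matrices over rings''. Over $R=\operatorname{End}_F(V)$ with $V$ infinite-dimensional, if $x,y$ are the right and left shifts (so $yx=1$, $xy\neq 1$) and $c=1-xy$, then $\begin{psmallmatrix} y & 0 \\ c & x\end{psmallmatrix}$ is invertible with inverse $\begin{psmallmatrix} x & c \\ 0 & y\end{psmallmatrix}$, yet neither diagonal entry is a unit. The claim \emph{is} true here, but you need the Bass stable rank one hypothesis: it forces $A$ to be directly finite, so from $b_{11}(b^{-1})_{11}=1$ you get $b_{11}$ genuinely invertible, and then a short induction (peel off the first row and column) handles the remaining $b_{jj}$. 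The paper's arrangement sidesteps this issue entirely, since scaling the unit-diagonal $c$ never requires inverting the $b_{jj}$.
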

\begin{proof}
Let $a \in \GL_n(A)$.
Use \autoref{prp:FactorizationSR1} to find $b,c \in \GL_n(A)$ such that $b$ is lower triangular, $c$ is upper triangular with all diagonal entries equal to $1$, and~$a$ is similar to $bc$.
It suffices to show that $bc$ is a product of three commutators, since then so is~$a$.

Since $A$ is an algebra over an infinite field, we can find invertible elements $\lambda_1,\ldots,\lambda_n \in A$ such that $\lambda_1+\ldots+\lambda_n = 0$.
Let $e \in M_n(A)$ denote the diagonal matrix with diagonal entries $\lambda_1,\ldots,\lambda_n$.
Then $ec$ is upper triangular with diagonal $\lambda_1,\ldots,\lambda_n$. Thus $ec$ has trace zero, and is therefore a commutator by \autoref{prp:TriangularTraceZero}.
Further, $be^{-1}$ is lower triangular (not necessarily with trace zero), and therefore is a product of two commutators by \autoref{prp:TriangularProductCommutators}.
Thus, $bc=(be^{-1})(ec)$ is a product of three commutators.
\end{proof}

%==========================================================================================
There are different notions of a `left (right) Hermite ring' in the literature, some meaning that every finitely generated, stably free left (right) $R$-module is free (see, for example, \cite[Definition~I.4.6]{Lam06SerresProblem}), and some referring to the notion studied by Kaplansky in \cite{Kap49ElemDivisors}.
Following Lam, \cite[Definition~I.4.23]{Lam06SerresProblem}, we say that a (not necessarily commutative) ring $R$ is \emph{right K-Hermite} (the 'K' standing for Kaplansky) if for every $1$-by-$2$ matrix $\begin{pmatrix} x & y \end{pmatrix} \in M_{1,2}(R)$ there exists $Q \in \GL_2(R)$ such that $\begin{pmatrix} x & y \end{pmatrix}Q = \begin{pmatrix} z & 0 \end{pmatrix}$ for some $z \in R$.
Equivalently, for every rectangular matrix $a \in M_{m,n}(R)$ there exists an invertible matrix $v \in M_n(R)$ such that $av$ is lower triangular;
see \cite[Theorem~3.5]{Kap49ElemDivisors}.
Similarly, a ring $R$ is \emph{left K-Hermite} if for every rectangular matrix $a \in M_{m,n}(R)$ there exists an invertible matrix $w \in M_m(R)$ such that $wa$ is upper triangular. 
%\textbf{These definitions don't look left/right symmetric to me.}

The next result is analogous to \autoref{prp:FactorizationSR1}, with the only difference that we obtain a result for all matrices (not only invertible matrices), and the lower triangular matrix $b$ may thus not be invertible.

%==========================================================================================
\begin{lma}
\label{prp:FactorizationHermiteSR1}
Let $R$ be a unital, right K-Hermite ring of Bass stable rank one, let $n \geq 2$, and let $a \in M_n(R)$.
Then there exist $b,c \in M_n(R)$ such that $b$ is lower triangular, $c$ is upper triangular with all diagonal entries equal to $1$, and  $a$ is similar to~$bc$.
\end{lma}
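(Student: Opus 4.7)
The plan is to reduce to the invertible case treated in \autoref{prp:FactorizationSR1} by first using the right K-Hermite property to absorb the (possibly non-invertible) matrix $a$ into a lower triangular factor, leaving behind an invertible matrix to which the Vaserstein--Wheland result from \cite{VasWhe90CommutatorsSR1} can still be applied.

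First, since $R$ is right K-Hermite, there exists $v \in \GL_n(R)$ such that $av$ is lower triangular; write $L := av$, so that $a = Lv^{-1}$ is a product of a lower triangular matrix $L \in M_n(R)$ and an invertible matrix $v^{-1} \in \GL_n(R)$.

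Next, apply (the proof of) \cite[Theorem~1]{VasWhe90CommutatorsSR1} to the invertible matrix $v^{-1}$, exactly as in the proof of \autoref{prp:FactorizationSR1}: write $v^{-1} = xyz$ where $x,z \in \GL_n(R)$ are lower triangular, $y \in \GL_n(R)$ is upper triangular, and $y$ and $z$ have all diagonal entries equal to $1$. Substituting yields $a = (Lx)\,y\,z$, where $Lx$ is lower triangular as a product of lower triangular matrices. Finally, conjugating by $z \in \GL_n(R)$ gives
\[
zaz^{-1} \;=\; z(Lx)\,y,
\]
so setting $b := z(Lx)$ (lower triangular, as a product of three lower triangular matrices) and $c := y$ (upper triangular with all diagonal entries $1$), we conclude that $a$ is similar to $bc$, as required.

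I do not foresee a serious obstacle: the argument is essentially that of \autoref{prp:FactorizationSR1} with one extra initial step using K-Hermiteness to triangulate $a$ on the right. The only point worth verifying is that the refined Vaserstein--Wheland factorization (with $1$'s on the diagonals of $y$ and $z$) depends only on the matrix being factored being invertible---and this is applied here to $v^{-1}$, not to $a$ itself, so the non-invertibility of $a$ causes no difficulty.
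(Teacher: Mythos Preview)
Your proof is correct and is essentially identical to the paper's own argument: both use the right K-Hermite property to write $a$ as a lower triangular matrix times an invertible matrix, then apply the refined Vaserstein--Wheland factorization (with $1$'s on the diagonals of the last two factors) to that invertible matrix, and finally conjugate by the rightmost lower triangular factor. The only differences are notational---your $L, v^{-1}, x, y, z$ play the roles of the paper's $x, y, u, v, w$, respectively.
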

\begin{proof}
By \cite[Theorem~3.5]{Kap49ElemDivisors}, there exist a lower triangular matrix $x \in M_n(R)$ and $y \in \GL_n(R)$ such that $a = xy$.
We now apply \cite[Theorem~1]{VasWhe90CommutatorsSR1} for $y$ and obtain $u,v,w \in \GL_n(R)$ such that $y = uvw$, and such that~$u$ and~$w$ are lower triangular, and $v$ is upper triangular.
From the proof of \cite[Theorem~1]{VasWhe90CommutatorsSR1} we see that we can arrange that $v$ and $w$ have all diagonal entries equal to $1$.

Set $b := wxu$ and $c := v$.
Then $b$ is lower triangular, and $c$ is upper triangular with all diagonal entries equal to $1$.
Further, $a$ is similar to the matrix $waw^{-1} = w(xuvw)w^{-1} = (wxu)v = bc$.
\end{proof}

%==========================================================================================
\begin{thm}
\label{prp:ProdCommHermiteSR1}
Let $A$ be a unital algebra over an infinite field, and assume that $A$ is right $K$-Hermite and has Bass stable rank one.
Then for $n \geq 3$, every matrix in $M_n(A)$ is a product of three commutators.
\end{thm}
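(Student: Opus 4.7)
The plan is to mimic the proof of \autoref{prp:ProdCommSR1} word-for-word, with the single substitution of \autoref{prp:FactorizationHermiteSR1} in place of \autoref{prp:FactorizationSR1}. The crucial observation is that the proof of \autoref{prp:ProdCommSR1} never actually uses the invertibility of the lower triangular factor~$b$; it only uses the invertibility of the auxiliary diagonal matrix~$e$.

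More precisely, given $a \in M_n(A)$, I would first apply \autoref{prp:FactorizationHermiteSR1} to obtain $b,c \in M_n(A)$ with $b$ lower triangular, $c$ upper triangular with all diagonal entries equal to~$1$, and $a$ similar to $bc$, say $a = v(bc)v^{-1}$. Since similarity preserves the property of being a product of $k$ commutators (conjugating each commutator factor $[x_i,y_i]$ by $v$ yields $[vx_iv^{-1},vy_iv^{-1}]$), it suffices to show that $bc$ is a product of three commutators. Next, because $A$ is an algebra over an infinite field, I can choose nonzero scalars $\lambda_1,\ldots,\lambda_n$ in that field with $\lambda_1+\cdots+\lambda_n = 0$; let $e \in M_n(A)$ be the diagonal matrix with these diagonal entries, which is invertible. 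Then $ec$ is upper triangular with the same diagonal as $e$, hence has trace zero, so $ec$ is a single commutator by \autoref{prp:TriangularTraceZero}. Meanwhile, $be^{-1}$ is lower triangular (though possibly with nonzero trace and not invertible), so by \autoref{prp:TriangularProductCommutators} it is a product of two commutators. Writing $bc = (be^{-1})(ec)$ then exhibits $bc$, and therefore $a$, as a product of three commutators.

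There is essentially no obstacle here: the right K-Hermite hypothesis is precisely what \autoref{prp:FactorizationHermiteSR1} needs to drop the invertibility assumption on $a$ from \autoref{prp:FactorizationSR1}, and nothing else in the chain of reasoning changes. The only points requiring care are purely bookkeeping: that $e^{-1}$ exists (which follows from the $\lambda_i$ being invertible in the base field), that $be^{-1}$ is still lower triangular (clear, since $e^{-1}$ is diagonal), that $ec$ is still upper triangular with diagonal $\lambda_1,\ldots,\lambda_n$ (clear, since $c$ has diagonal $1,\ldots,1$), and that $n \geq 3$ so \autoref{prp:TriangularProductCommutators} applies to $be^{-1}$.
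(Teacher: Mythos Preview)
Your proposal is correct and is exactly the argument the paper has in mind: the paper's own proof of \autoref{prp:ProdCommHermiteSR1} simply says ``This is analogous to the proof of \autoref{prp:ProdCommSR1},'' and your write-up spells out precisely that analogy, with the sole change being the use of \autoref{prp:FactorizationHermiteSR1} in place of \autoref{prp:FactorizationSR1}. Your observation that the invertibility of $b$ is never used downstream is the key point that makes the transplant work.
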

\begin{proof}
This is analogous to the proof of \autoref{prp:ProdCommSR1}.
\end{proof}

%==========================================================================================
\begin{qst}
Can the assumption that $A$ is an algebra over an infinite field be removed in \autoref{prp:ProdCommSR1} or \autoref{prp:ProdCommHermiteSR1}?
Do these results hold for $n=2$?
\end{qst}

%==========================================================================================
\begin{rmk}
Chen and Chen showed in \cite[Theorem~2.2]{CheChe04ProdThreeTriangular} that a unital ring~$R$ is right K-Hermite and has Bass stable rank one if and only if every matrix $a \in M_n(R)$ admits a factorization $a=bcd$ in $M_n(R)$ with $b$ and $d$ lower triangular, $c$ upper triangular and all diagonal entries of $c$ and $d$ equal to $1$.
\end{rmk}

%==========================================================================================
Let us point out a few instances to which the above results are applicable. 
The first one is extremely easy, but we will need it in the next section.

%==========================================================================================
\begin{exa}
\label{lma:DivRingKHerSR1}
Every division ring $D$ is right K-Hermite and has Bass stable rank one. The latter is obvious since $D$ has no proper nonzero left ideals. To prove the former, take $x,y\in D$. We want to find an invertible matrix $Q\in M_2(D)$ such that $\begin{pmatrix} x & y \end{pmatrix}Q=\begin{pmatrix} z & 0 \end{pmatrix}$ 
for some $z\in D$.
If $x\neq 0$, one may take
$Q=\begin{psmallmatrix} 1 & -x^{-1}y \cr 0 & 1
\cr
\end{psmallmatrix}$. If $x=0$,
one may take $Q=\begin{psmallmatrix} 0 & 1 \cr 1 & 0
\cr
\end{psmallmatrix}$. It follows that $D$ is right K-Hermite. 
\end{exa}

%==========================================================================================
The next example is more general.

%==========================================================================================
\begin{exa}
A unital ring is said to be \emph{(von Neumann) regular} if for every $x \in R$ there exists $y \in R$ such that $x = xyx$.
If one can always arrange  $y$ to be invertible, then $R$ is said to be \emph{unit-regular}.
We refer to \cite{Goo79RegRings} for more details.

A regular ring has Bass stable rank one if and only if it is unit-regular;
see \cite[Proposition~4.12]{Goo79RegRings}.
Further, every unit-regular ring is right K-Hermite; this follows from \cite[Theorem~9]{MenMon82RegRgSR2} as noted in the introduction of \cite{AraGooOMePar97DiagonalizationRegRg}.

Thus, if $R$ is a unit-regular ring that is an algebra over an infinite field, and $n \geq 3$, then every matrix in $M_n(R)$ is a product of three commutators by \autoref{prp:ProdCommHermiteSR1}.
\end{exa}

%==========================================================================================
\begin{exa}
A unital \ca{} $A$ is said to have \emph{stable rank one} if $\GL(A)$ is norm-dense in $A$;
see \cite{Rie83DimSRKThy}.
By \cite{HerVas84StableRangeCAlg}, $A$ has stable rank one if and only if $A$ has Bass stable rank one (as a ring).
Further, every \ca{} is an algebra over the infinite field of complex numbers.
Therefore, \autoref{prp:ProdCommSR1} applies to invertible matrices of size at least $3$-by-$3$ over \ca{s} of stable rank one.
In some cases, one has $A \cong M_n(B)$ for some $n \geq 3$ and some other \ca{} $B$ (which then automatically has stable rank one as well) and then \autoref{prp:ProdCommSR1} applies to invertible elements in $A$ itself.
For example, every invertible element in a UHF-algebra is a product of three commutators.

Many naturally occurring simple, unital \ca{s} have stable rank one. This includes all finite, nuclear, classifiable \ca{s} \cite{Ror04StableRealRankZ}; 
many finite, nuclear, non-classifiable \ca{s} \cite{EllHoTom09ClassSimpleSR1, Vil98SimpleCaPerforation, Tom08ClassificationNuclear}; 
reduced group \ca{s} of free products \cite{DykHaaRor97SRFreeProd}; 
and crossed products of minimal homeomorphisms on infinite, compact, metric spaces \cite{AlbLut22SRDiagASHCrProd}.

The comparison theory of positive elements and Hilbert modules is particularly well-developed for \ca{s} of stable rank one \cite{Thi20RksOps, AntPerRobThi22CuntzSR1}.
\end{exa}

%==========================================================================================
%==========================================================================================
\section{Matrices over division rings}
\label{sec:DivRgs}

%==========================================================================================
In this section, we show that every matrix over a division ring with infinite center is a product of two commutators;
see \autoref{prp:ProdCommInftyCenter}.
We also show that every singular matrix over an arbitrary division ring is a product of two commutators;
see \autoref{prp:SingularMatrix}.

%==========================================================================================
\begin{lma}
\label{prp:SimpleCommutatorSize2}
Let $D$ be a division ring, and let $r,s,t \in D$.
Then the matrix $a = 
\begin{psmallmatrix}
r & s \\
t & -r
\end{psmallmatrix}
\in M_2(D)$ is a commutator in $M_2(D)$.
More precisely, there exist $b \in \GL_2(D)$ and $c \in M_2(D)$ such that $a = [b,c]$.
\end{lma}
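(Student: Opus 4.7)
The plan is to exhibit $b \in \GL_2(D)$ and $c \in M_2(D)$ with $a = [b,c]$ by case analysis on the entries of $a$, and to verify $[b,c] = a$ in each case by a direct matrix multiplication.

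I will split into three cases. In the first case $s = 0$, I will take $b = \begin{psmallmatrix} 1 & 0 \\ 1 & 1 \end{psmallmatrix}$, which is unipotent and hence invertible over any unital ring, together with $c = \begin{psmallmatrix} t & -r \\ 0 & 0 \end{psmallmatrix}$; a short computation will then confirm that $[b,c] = \begin{psmallmatrix} r & 0 \\ t & -r \end{psmallmatrix} = a$. In the second case, $s \neq 0$ and $s + t = 0$, I will take $b = \begin{psmallmatrix} 0 & 1 \\ 1 & 0 \end{psmallmatrix}$ and $c = \begin{psmallmatrix} 0 & 0 \\ r & s \end{psmallmatrix}$; one computes $[b,c] = \begin{psmallmatrix} r & s \\ -s & -r \end{psmallmatrix}$, which equals $a$ because $t = -s$.

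The substantial case is the third one, where $s \neq 0$ and $s + t \neq 0$. Here I will take $b = \begin{psmallmatrix} 1 & 1 \\ -s^{-1}t & 1 \end{psmallmatrix}$ together with the same $c = \begin{psmallmatrix} 0 & 0 \\ r & s \end{psmallmatrix}$ as above. Invertibility of $b$ will follow by row reduction: adding $s^{-1}t$ times the first row to the second turns $b$ into an upper triangular matrix with diagonal $(1,\,1 + s^{-1}t) = (1,\,s^{-1}(s+t))$, and each of these entries is nonzero in $D$ and therefore invertible. A direct calculation then gives $bc = \begin{psmallmatrix} r & s \\ r & s \end{psmallmatrix}$ and $cb = \begin{psmallmatrix} 0 & 0 \\ r - t & r + s \end{psmallmatrix}$, so that $[b,c] = \begin{psmallmatrix} r & s \\ t & -r \end{psmallmatrix} = a$.

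I expect the main obstacle to be identifying the right form for $b$ in the third case, where the non-commutativity of $D$ forces a genuine dependence of $b$ on the entries of $a$. The twist $-s^{-1}t$ in the $(2,1)$-entry of $b$ is exactly what is needed for the $(2,1)$-entry of $cb$ to take the value $r - t$, so that the $(2,1)$-entry of $bc - cb$ comes out to $t$. Simpler symmetric choices (a permutation, or an elementary transvection alone) each force a one-parameter relation among $r$, $s$, $t$ and cannot handle the generic case, so a case split along $s = 0$, along $s + t = 0$, and otherwise appears unavoidable.
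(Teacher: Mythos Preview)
Your proof is correct and follows essentially the same approach as the paper: an explicit case analysis exhibiting concrete $b\in\GL_2(D)$ and $c\in M_2(D)$ with $[b,c]=a$. The paper splits into four cases according to which of $r,s,t$ vanish (using an anti-diagonal $b$ in the generic case $r,s,t\neq 0$), whereas you split on $s=0$ and then on $s+t=0$ and use a near-unipotent $b$ in your generic case; both schemes work, and yours has the minor pleasant feature of keeping the same rank-one $c=\begin{psmallmatrix}0&0\\ r&s\end{psmallmatrix}$ across the two cases with $s\neq 0$.
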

\begin{proof}
Case~1:
We have $r=0$ and $s,t \neq 0$.
Then
\begin{align*}
\begin{pmatrix}
0 & s \\
t & 0
\end{pmatrix}
= \left[
\begin{pmatrix}
0 & -s \\
t & 0
\end{pmatrix},
\begin{pmatrix}
1 & 0 \\
0 & 0
\end{pmatrix}
\right]
\end{align*}
and the matrix 
$\begin{psmallmatrix}
0 & -s \\
t & 0
\end{psmallmatrix}$
is invertible.

Case~2:
We have $s=0$.
Then
\begin{align*}
\begin{pmatrix}
r & 0 \\
t & -r
\end{pmatrix}
= \left[
\begin{pmatrix}
1 & 0 \\
1 & 1
\end{pmatrix},
\begin{pmatrix}
0 & -r \\
0 & -t
\end{pmatrix}
\right]
\end{align*}
and the matrix
$\begin{psmallmatrix}
1 & 0 \\
1 & 1
\end{psmallmatrix}$
is invertible.

Case~3:
We have $t=0$.
This is analogous to case~2.

Case~4:
We have $r,s,t \neq 0$.
Then
\begin{align*}
\begin{pmatrix}
r & s \\
t & -r
\end{pmatrix}
= \left[
\begin{pmatrix}
0 & -srt^{-1} \\
r & 0
\end{pmatrix},
\begin{pmatrix}
0 & -1 \\
0 & -tr^{-1}
\end{pmatrix}
\right]
\end{align*}
and the matrix
$\begin{psmallmatrix}
0 & -srt^{-1} \\
r & 0
\end{psmallmatrix}$
is invertible.
\end{proof}

%==========================================================================================
Next, we consider arbitrary $2$-by-$2$ matrices over a division ring. 

%==========================================================================================
\begin{prp}
\label{prp:ProdCommutatorsSize2}
Let $D$ be a division ring, and let $a \in M_2(D)$.
Then there exist $b,c,d,e \in M_2(D)$ such that $a=[b,c][d,e]$, and such that $[b,c]$ and $d$ are invertible.
In particular, every matrix in $M_2(D)$ is a product of two commutators.
\end{prp}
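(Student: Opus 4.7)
The plan is to reduce the statement to finding, for each $a \in M_2(D)$, a factorization $a = fg$ in which both $f$ and $g$ are trace-zero matrices in $M_2(D)$ and $f$ is invertible. Once such $f,g$ are constructed, \autoref{prp:SimpleCommutatorSize2} applied to $f$ produces $b \in \GL_2(D)$ and $c \in M_2(D)$ with $f = [b,c]$, so $[b,c] = f$ is invertible; applied to $g$, it produces $d \in \GL_2(D)$ and $e \in M_2(D)$ with $g = [d,e]$. Multiplying then gives $a = [b,c][d,e]$ with the required invertibility of $[b,c]$ and $d$.

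The key trick for producing the factorization is that the matrices
\[
\begin{pmatrix} 1 & s \\ 0 & -1 \end{pmatrix}, \qquad
\begin{pmatrix} 1 & 0 \\ t & -1 \end{pmatrix}, \qquad
\begin{pmatrix} 0 & 1 \\ 1 & 0 \end{pmatrix}
\]
are all trace-zero and involutive (they square to the identity), hence invertible with themselves as inverses. Choosing $f$ from one of these three families makes $g = f^{-1}a = fa$ trivial to compute and leaves one free parameter ($s$ or $t$) to be tuned so that $g$ has trace zero.

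Writing $a = \begin{psmallmatrix} \alpha & \beta \\ \gamma & \delta \end{psmallmatrix}$, I would split into three cases. If $\gamma \neq 0$, hence invertible in $D$, set $f = \begin{psmallmatrix} 1 & (\delta-\alpha)\gamma^{-1} \\ 0 & -1 \end{psmallmatrix}$; a direct computation shows that the trace of $fa$ equals $\alpha + (\delta-\alpha)\gamma^{-1}\gamma - \delta = 0$. If $\gamma = 0$ but $\beta \neq 0$, take $f = \begin{psmallmatrix} 1 & 0 \\ (\delta-\alpha)\beta^{-1} & -1 \end{psmallmatrix}$, and $fa$ again has trace zero by the same calculation with $\beta$ in place of $\gamma$. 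If $\gamma = \beta = 0$, so $a$ is diagonal, take $f = \begin{psmallmatrix} 0 & 1 \\ 1 & 0 \end{psmallmatrix}$, whence $fa = \begin{psmallmatrix} 0 & \delta \\ \alpha & 0 \end{psmallmatrix}$ is visibly trace-zero. In every case $g := fa$ is trace-zero and $f$ is invertible, so \autoref{prp:SimpleCommutatorSize2} finishes the argument.

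There is no real obstacle here: the involution trick reduces the problem to a case analysis governed by which off-diagonal entry of $a$ is nonzero and thus available to absorb the correction $\delta - \alpha$ needed to make $fa$ trace-zero. Noncommutativity of $D$ is handled simply by keeping all inverses on the correct side during these one-line computations.
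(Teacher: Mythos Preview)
Your proof is correct and follows essentially the same approach as the paper's: both factor $a = fg$ with $f$ an invertible trace-zero matrix and $g$ trace-zero, then invoke \autoref{prp:SimpleCommutatorSize2} twice. Your case split (three cases keyed on which off-diagonal entry is nonzero) is slightly leaner than the paper's four, and your explicit observation that the chosen $f$ is an involution, so that $g = fa$, is a tidy touch---but the substance is the same.
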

\begin{proof}
Let $a = 
\begin{psmallmatrix}
r & s \\
t & u
\end{psmallmatrix}
\in M_2(D)$.

\smallskip

Case~1: We have $s,t \neq 0$.
Then
\begin{align*}
a = \begin{pmatrix}
r & s \\
t & u
\end{pmatrix}
= 
\begin{pmatrix}
0 & -st^{-1} \\
1 & 0
\end{pmatrix}
\begin{pmatrix}
t & u \\
-ts^{-1}r & -t
\end{pmatrix}
\end{align*}
and the first matrix is invertible. By \autoref{prp:SimpleCommutatorSize2}, both matrices appearing in the factorization above are commutators of a matrix in $\GL_2(D)$ and a matrix in $M_2(D)$.

Case~2:
We have $s=0$ and $t \neq 0$.
Then
\begin{align*}
a = \begin{pmatrix}
r & 0 \\
t & u
\end{pmatrix}
= \begin{pmatrix}
1 & -(u-r)t^{-1} \\
0 & -1
\end{pmatrix}
\begin{pmatrix}
u & (u-r)t^{-1}u \\
-t & -u
\end{pmatrix}
\end{align*}
and the first matrix is invertible. Again by \autoref{prp:SimpleCommutatorSize2}, both matrices are commutators of a matrix in $\GL_2(D)$ and a matrix in $M_2(D)$.

Case~3:
We have $s \neq 0$ and $t=0$.
This is analogous to case~2.

Case~4:
We have $s=t=0$.
Then 
\begin{align*}
a = \begin{pmatrix}
r & 0 \\
0 & u
\end{pmatrix}
= \begin{pmatrix}
0 & 1 \\
1 & 0
\end{pmatrix}
\begin{pmatrix}
0 & u \\
r & 0
\end{pmatrix}
\end{align*}
and the first matrix is invertible. Once again by \autoref{prp:SimpleCommutatorSize2}, both matrices are commutators of a matrix in $\GL_2(D)$ and a matrix in $M_2(D)$.
\end{proof}

%==========================================================================================
\begin{lma}
\label{prp:ProdCommutatorsCentral}
Let $D$ be a division ring containing at least three elements, let $n \geq 2$, and let $1_n \in M_n(D)$ denote the identity matrix.
Then there exist $b,c,d,e \in M_n(D)$ such that $1_n=[b,c][d,e]$, and such that $[b,c]$ and $d$ are invertible.
%In particular, every central matrix in $M_n(D)$ is a product of two commutators.
\end{lma}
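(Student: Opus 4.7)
The plan is to reduce the result to the base cases $n = 2$ and $n = 3$ via a block-sum argument. If the lemma holds for sizes $m$ and $k$, it holds for $m + k$: given factorizations $1_m = [b_1, c_1][d_1, e_1]$ and $1_k = [b_2, c_2][d_2, e_2]$ satisfying the invertibility hypotheses, the block-diagonal sums $b := b_1 \oplus b_2$, $c := c_1 \oplus c_2$, $d := d_1 \oplus d_2$, $e := e_1 \oplus e_2$ satisfy $[b, c] = [b_1, c_1] \oplus [b_2, c_2]$ and hence $1_{m+k} = [b, c][d, e]$, with both $[b, c]$ and $d$ invertible as block-diagonal matrices with invertible blocks. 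Since every integer $n \geq 2$ is a non-negative integer combination of $2$ and $3$, it suffices to establish the cases $n = 2$ and $n = 3$.

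The case $n = 2$ is immediate from \autoref{prp:ProdCommutatorsSize2} applied to $a = 1_2$, using only that $D$ is a division ring. For the case $n = 3$, I would apply \autoref{prp:TriangularFactorizes} to the upper triangular matrix $1_3$, obtaining the explicit factorization $1_3 = BC$ with $B = \begin{psmallmatrix}0&1&0\\0&0&1\\1&0&0\end{psmallmatrix}$ and $C = \begin{psmallmatrix}0&0&1\\1&0&0\\0&1&0\end{psmallmatrix}$ two cyclic permutation matrices in $M_3(D)$, both invertible with zero diagonals. The goal is then to realize $B = [b, c]$ and $C = [d, e]$ with $d \in \GL_3(D)$. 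I would apply \autoref{prp:ZeroDiagonal} for this: it requires three central elements of $D$ with invertible pairwise differences, plus nonzero entries in the decomposition of $C$ so that $d$ is invertible. The hypothesis $|D| \geq 3$ supplies $\lambda \in D \setminus \{0, 1\}$ with $\lambda, \lambda - 1 \in D^\times$. In the typical situation where $\lambda$ is central and $\lambda \neq -1$, the triples $\{0, 1, \lambda\}$ (for $B$) and $\{1, \lambda, \lambda + 1\}$ (for $C$) satisfy the required conditions and yield the desired decomposition.

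The main obstacle is the edge case where no such triple of three nonzero central elements exists -- most notably $D = \mathbb{F}_3$, for which $\lambda$ must be $-1$ and $Z(D)$ contains only two nonzero elements. I would handle this separately using the trace-zero structure: in characteristic $3$, the matrix $1_3$ has trace $0$ and is itself an additive commutator. A direct computation exhibits $1_3 = [C, c']$ in $M_3(\mathbb{F}_3)$ with $c' = \begin{psmallmatrix}0&0&0\\0&0&2\\1&0&0\end{psmallmatrix}$ (and $C$ the cyclic permutation matrix above), so the factorization $1_3 = [C, c'] \cdot [C, c']$ meets the requirements, with both $[C, c'] = 1_3$ and $d := C$ invertible. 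A similar trace-based argument covers any division ring of characteristic $3$ whose center is small, while the remaining exotic case of a noncommutative $D$ with small center of another characteristic would be handled by constructing analogous commutators using non-central elements of $D$, which are available by noncommutativity.
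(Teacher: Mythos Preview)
Your overall architecture matches the paper's proof exactly: reduce to $n=2$ and $n=3$ by block-diagonal sums, invoke \autoref{prp:ProdCommutatorsSize2} for $n=2$, and for $n=3$ factor $1_3 = BC$ as the product of the two cyclic permutation matrices and realize each as a commutator with the required invertibility. The only difference is in how you produce those commutator realizations in size $3$.

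There you take a detour through \autoref{prp:ZeroDiagonal}, which forces the diagonal entries of $d$ to be \emph{central}, and this is what generates your case analysis. The case you leave open---a noncommutative $D$ with $Z(D)=\mathbb{F}_2$---is genuine: such division rings exist, and for them you have neither three distinct central elements (so \autoref{prp:ZeroDiagonal} is unavailable even for $B$) nor characteristic $3$ (so your trace trick does not apply). Saying you ``would construct analogous commutators using non-central elements'' is not yet a proof.

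The paper sidesteps all of this with a single explicit identity that uses an arbitrary $x\in D\setminus\{0,1\}$ with no centrality hypothesis: setting $y=x-1$,
\[
\left[
\begin{pmatrix}0&0&-x\\1&0&0\\0&y&0\end{pmatrix},
\begin{pmatrix}0&0&1\\0&0&0\\0&-1&0\end{pmatrix}
\right]
=
\begin{pmatrix}0&1&0\\0&0&1\\1&0&0\end{pmatrix},
\]
and the first factor is invertible since $x,1,y\in D^\times$. The same construction (or its transpose) handles $C$. This one line replaces your entire third paragraph and covers every $D$ with $|D|\ge 3$ uniformly; it is exactly the ``analogous commutator using non-central elements'' you were gesturing at.
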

\begin{proof}
%We treat the cases $n=2$ and $n=3$ separately.
For $n=2$ this follows from \autoref{prp:ProdCommutatorsSize2}, so
we consider the case $n=3$.

Since $D$ contains at least three elements, we can choose $x\in D\setminus\{0,1\}$. Then $y:=x-1$ is not zero.
We have
\begin{align*}
\left[
\begin{pmatrix}
0 & 0 & -x \\
1 & 0 & 0 \\
0 & y & 0 \\
\end{pmatrix},
\begin{pmatrix}
0 & 0 & 1 \\
0 & 0 & 0 \\
0 & -1 & 0
\end{pmatrix}
\right]	
= 
\begin{pmatrix}
0 & 1 & 0 \\
0 & 0  & 1 \\
1 & 0 & 0
\end{pmatrix}
\end{align*}
and the matrix 
$\begin{psmallmatrix}
0 & 0 & -x \\
1 & 0 & 0 \\
0 & y & 0 \\
\end{psmallmatrix}$
is invertible.
Similarly, we see that 
$\begin{psmallmatrix}
0 & 0 & 1 \\
1 & 0 & 0 \\
0 & 1 & 0 \\
\end{psmallmatrix}$
is an (invertible) commutator in $M_3(D)$. Since
$1_3=\begin{psmallmatrix}
0 & 1 & 0 \\
0 & 0  & 1 \\
1 & 0 & 0
\end{psmallmatrix}\begin{psmallmatrix}
0 & 0 & 1 \\
1 & 0 & 0 \\
0 & 1 & 0 \\
\end{psmallmatrix}$, this establishes the case $n=3$.

In preparation for the general case, let us fix matrices $b_2,c_2,d_2,e_2\in M_2(D)$ and $b_3,c_3,d_3,e_3\in M_3(D)$ satisfying
\[1_2=\big[b_2,c_2\big]\big[d_2,e_2\big] \ \ \mbox{ and } \ \ 1_3=\big[b_3,c_3\big]\big[d_3,e_3\big],\]
and such that
$\big[b_2,c_2\big], \big[b_3,c_3\big], d_2$ and $d_3$ are invertible.
Given $n\geq 4$, find $k,l\ge 0$ with $n = 2k + 3l$.
Let $b\in M_n(D)$ be the block-diagonal matrix with $k$ blocks $b_2$ and $l$ blocks $b_3$. Define $c,d,e\in M_n(D)$ similarly. It is then easy to check that $1_n=[b,c][d,e]$, and that $[b,c]$ and $d$ are invertible, thus finishing the proof.
\end{proof}

%==========================================================================================
\begin{thm}
\label{prp:ProdCommInftyCenter}
Let $D$ be a division ring with infinite center.
Then every matrix in $M_n(D)$ for $n \geq 2$ is a product of two commutators.
\end{thm}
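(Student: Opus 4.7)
The case $n=2$ is immediate from \autoref{prp:ProdCommutatorsSize2}, which requires no hypothesis on the center, so from now on I assume $n\geq 3$ and let $F$ denote the infinite center of $D$. The plan is to sharpen the three-commutator argument of \autoref{prp:ProdCommHermiteSR1} to produce only two commutators, using the extra room afforded by the infinite cardinality of $F$.

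By \autoref{lma:DivRingKHerSR1}, $D$ is right K-Hermite and has Bass stable rank one, so \autoref{prp:FactorizationHermiteSR1} yields a similarity $a\sim bc$ with $b=(b_{ij})\in M_n(D)$ lower triangular and $c\in M_n(D)$ upper triangular with $c_{ii}=1$ for all $i$. Because $g[x,y]g^{-1}=[gxg^{-1},gyg^{-1}]$, the property of being a product of two commutators is invariant under similarity, so it suffices to express $bc$ itself in this form. The idea is then to insert an invertible diagonal matrix $e=\mathrm{diag}(\lambda_1,\ldots,\lambda_n)$ and to write $bc=(be^{-1})(ec)$; here $ec$ is upper triangular with diagonal $(\lambda_1,\ldots,\lambda_n)$, while $be^{-1}$ is lower triangular with diagonal $(b_{11}\lambda_1^{-1},\ldots,b_{nn}\lambda_n^{-1})$. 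By \autoref{prp:TriangularTraceZero}, both factors are commutators as soon as one secures
\[
\sum_{i=1}^n \lambda_i = 0 \andSep \sum_{i=1}^n b_{ii}\lambda_i^{-1} = 0.
\]

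The crux of the proof is producing such $\lambda_i$, and this is where the infinite center enters. My first attempt would be $\lambda_i\in F^*$, in which case both equations become $F$-linear conditions on the vector $(\lambda_1,\ldots,\lambda_n)$, once one views the diagonal entries $b_{ii}$ as vectors in the $F$-vector space $D$; since $F$ is infinite and $n\geq 3$ there is enough room for a generic dimension count to provide a solution with all $\lambda_i$ nonzero. If the $b_{ii}$ happen to be $F$-linearly independent in $D$, so that the central route is blocked, I would allow $\lambda_i\in D^*$ and solve the resulting noncommutative problem, which for $n=3$ reduces (after setting $\mu=\lambda_1^{-1}\lambda_2$, so that $\lambda_3=-(\lambda_1+\lambda_2)$) to finding $\mu\in D^*\setminus\{-1\}$ satisfying $b_{11}\mu+b_{22}\mu^{-1}=b_{33}-b_{11}-b_{22}$; alternatively, one can exploit the flexibility in the Vaserstein--Wheland step underlying \autoref{prp:FactorizationHermiteSR1} to preconfigure the $b_{ii}$ in a convenient position before starting. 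Combining these ideas to handle every case uniformly is the main obstacle, and it is precisely here that the infiniteness of $F$ is genuinely used rather than merely the availability of finitely many central scalars.
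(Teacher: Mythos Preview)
Your proposal is not a complete proof—the crucial step of producing the diagonal entries $\lambda_1,\ldots,\lambda_n$ is only sketched, and the sketch contains an error that cannot be repaired along these lines.

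The claim that ``both equations become $F$-linear conditions on the vector $(\lambda_1,\ldots,\lambda_n)$'' is false: the first equation $\sum_i\lambda_i=0$ is linear in the $\lambda_i$, but the second equation $\sum_i b_{ii}\lambda_i^{-1}=0$ is linear in the $\lambda_i^{-1}$, and no dimension count bridges the two. Concretely, take $D=\mathbb{Q}$ (an infinite field, hence covered by the theorem), $n=3$, and $a=1_3$; the factorization of \autoref{prp:FactorizationHermiteSR1} gives $b=c=1_3$, so you would need nonzero $\lambda_1,\lambda_2,\lambda_3\in\mathbb{Q}$ with $\sum_i\lambda_i=0$ and $\sum_i\lambda_i^{-1}=0$. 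These force the elementary symmetric functions $e_1(\lambda)=e_2(\lambda)=0$, so the $\lambda_i$ are the roots of $X^3-e_3$; but $X^3-e_3$ never has three nonzero rational roots. Your fallback of allowing noncentral $\lambda_i\in D^\ast$ is unavailable here since $D$ is commutative, and the idea of ``preconfiguring the $b_{ii}$'' does not help because the diagonal of $bc$ equals that of $b$, which is pinned down by the similarity class of $a$—for $a=1_3$ every diagonal entry must be $1$. You essentially acknowledge the obstacle in your last paragraph, but it is a genuine obstruction rather than a technicality to be filled in.

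The paper's argument proceeds along entirely different lines. It disposes of the field case via Botha's theorem and, for noncommutative $D$, runs an induction on $n$: a result of Amitsur--Rowen allows passage to a similar matrix with $(1,1)$-entry $0$, after which one writes $a$ in block form, applies the inductive hypothesis to the lower-right $n\times n$ block, and uses a nonzero commutator in $D$ for the upper-left corner. The infinite center enters only at the last step, where Cohn's eigenvalue theory over skew fields guarantees a central scalar $\lambda$ avoiding the finite spectrum of a given invertible matrix, so that a particular block-diagonal matrix can be realized as a commutator.
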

\begin{proof}
For every (not necessarily infinite) 
 field $F$, every matrix in $M_n(F)$ for $n \geq 2$ is a product of two commutators; see \cite[Theorem 4.1]{Bot97ProdMatPrescribedTraces}.
Thus, we may assume that $D$ is noncommutative.

We verify the following stronger result by induction over $n$:
For all $a \in M_n(D)$, there exist $b,c,d,e \in M_n(D)$ such that $a=[b,c][d,e]$, and such that $[b,c]$ and $d$ are invertible.

The case $n=2$ follows from \autoref{prp:ProdCommutatorsSize2}.
Assume that the result holds for some $n \geq 2$, and let us verify it for $n+1$.

Let $a \in M_{n+1}(D)$.
If $a$ is central, then the result follows from \autoref{prp:ProdCommutatorsCentral}.
Thus, we may assume that $a$ is noncentral.
Then, by \cite[Proposition~1.8]{AmiRow94RedTr0}, $a$ is similar to a matrix whose $(1,1)$-entry is zero.
(Note that the global assumption of \cite{AmiRow94RedTr0} that division rings are finite-dimensional over their centers is not used in the proof of \cite[Proposition~1.8]{AmiRow94RedTr0}.)
Since the desired conclusion is invariant under similarity, we may assume, without loss of generality, that $a_{11}=0$.
Let $b \in M_{1,n}(D)$, $c \in M_{n,1}(D)$ and $x \in M_n(D)$ satisfy
\[
a = \begin{pmatrix}
0 & b \\
c & x
\end{pmatrix}.
\]
Since $D$ is noncommutative, there exist a nonzero commutator $d \in D$.
By the inductive assumption, we have $x=y[v,w]$ for an invertible commutator $y \in M_n(D)$ and $v \in \GL_n(D)$ and $w \in M_n(D)$.
Then
\[
a 
= \begin{pmatrix}
0 & b \\
c & x
\end{pmatrix}
= \begin{pmatrix}
d & 0 \\
0 & y
\end{pmatrix}
\begin{pmatrix}
0 & d^{-1}b \\
y^{-1}c & [v,w]
\end{pmatrix}.
\]
The matrix 
$\begin{psmallmatrix}
d & 0 \\
0 & y
\end{psmallmatrix}$
is an invertible commutator in $M_{n+1}(D)$.
It remains to verify that 
$\begin{psmallmatrix}
0 & d^{-1}b \\
y^{-1}c & [v,w]
\end{psmallmatrix}$
is the commutator of some matrix in $\GL_{n+1}(D)$ and a matrix in $M_{n+1}(D)$. For this, we will need a result of \cite{Coh73SimReductionOverSkewField}, and we first recall some of its terminology. 

An element $\lambda \in D$ is called a \emph{left eigenvalue} of $v$ if there exists a nonzero $\xi \in M_{n,1}(D)$ such that $v\xi = \xi\lambda$, and $\lambda$ is called a \emph{right eigenvalue} if there exists a nonzero $\eta \in M_{1,n}(D)$ such that $\eta v = \lambda\eta$.
The set of all left and right eigenvalues is called the spectrum of $v$;
see \cite{Coh73SimReductionOverSkewField}.
By \cite[Proposition~2.5]{Coh73SimReductionOverSkewField}, an element~$\lambda$ in the center $Z(D)$ of $D$ belongs to the spectrum of $v$ if and only if $v-\lambda$ is singular ($\lambda$ is called a `singular eigenvalue' of $z$).
Further, by \cite[Theorem~2.4]{Coh73SimReductionOverSkewField}, the spectrum of $v$ contains at most finitely many conjugacy classes.
Consequently, there are at most finitely many $\lambda \in Z(D)$ such that $v-\lambda$ is singular.

Using that $Z(D)$ is infinite, we obtain a nonzero $\lambda \in Z(D)$ such that $v-\lambda$ is invertible.
We then have
\[
\begin{pmatrix}
0 & d^{-1}b \\
y^{-1}c & [v,w]
\end{pmatrix}
= \left[
\begin{pmatrix}
\lambda & 0 \\
0 & v
\end{pmatrix},
\begin{pmatrix}
0 & d^{-1}b(\lambda-v)^{-1} \\
(v-\lambda)^{-1}y^{-1}c & w
\end{pmatrix}
\right]
\]
and 
$\begin{psmallmatrix}
\lambda & 0 \\
0 & v
\end{psmallmatrix}$
is invertible.
This proves the inductive step and finishes the proof.
\end{proof}

%==========================================================================================
Recall that a division ring that is finite-dimensional over its center is called a {\em central division algebra}.
Since every finite division ring is a field, \autoref{prp:ProdCommInftyCenter} along with \cite[Theorem~4.1]{Bot97ProdMatPrescribedTraces} yields the following result.

%==========================================================================================
\begin{cor}
\label{cd}
Let $D$ be a central division algebra.
Then every matrix in $M_n(D)$ for $n \geq 2$ is a product of two commutators.    
\end{cor}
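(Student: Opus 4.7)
The plan is to prove the corollary by a short case split on whether the center $Z(D)$ is infinite, reducing each case to a result already available.

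First I would observe that the center $Z(D)$ is a field (as the center of any division ring) and that $D$, being a central division algebra, is finite-dimensional as a $Z(D)$-vector space. From this it follows that $D$ is finite if and only if $Z(D)$ is finite. Indeed, if $Z(D)$ is finite then $D$ has finite cardinality (being a finite-dimensional vector space over a finite field), and conversely if $D$ is finite then so is its center.

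Next I would split into two cases. If $Z(D)$ is infinite, then \autoref{prp:ProdCommInftyCenter} applies directly: every matrix in $M_n(D)$ with $n \geq 2$ is a product of two commutators. If on the other hand $Z(D)$ is finite, then by the above observation $D$ itself is finite, and Wedderburn's little theorem forces $D$ to be a field. In this commutative case, Botha's result \cite[Theorem~4.1]{Bot97ProdMatPrescribedTraces} guarantees that every matrix in $M_n(D)$ with $n\ge 2$ is a product of two commutators.

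Combining the two cases yields the conclusion. There is essentially no obstacle here, since the work has been done in \autoref{prp:ProdCommInftyCenter} and in Botha's theorem; the only input that is needed beyond those is the elementary observation that a central division algebra with finite center is itself finite, after which Wedderburn's little theorem handles the remaining case.
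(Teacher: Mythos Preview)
Your proposal is correct and is essentially the same argument the paper gives: split on whether $Z(D)$ is infinite or finite, invoke \autoref{prp:ProdCommInftyCenter} in the first case, and in the second case use finite-dimensionality over $Z(D)$ together with Wedderburn's little theorem to reduce to a field and apply \cite[Theorem~4.1]{Bot97ProdMatPrescribedTraces}. The paper's justification is just the one-line remark preceding the corollary, and your write-up simply spells out the same reasoning in slightly more detail.
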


%==========================================================================================
The comparison of \autoref{prp:ProdCommutatorsSize2} and \autoref{prp:ProdCommInftyCenter} raises the following question:

%==========================================================================================
\begin{qst}
Can the assumption that $D$ has infinite center be removed in \autoref{prp:ProdCommInftyCenter}?
\end{qst}

%==========================================================================================
For $n=2$, the answer is ``yes'' by \autoref{prp:ProdCommutatorsSize2}. The next proposition provides another such instance.

%==========================================================================================
\begin{prp}
\label{prp:SingularMatrix}
Every singular matrix over a division ring is a product of two commutators.
\end{prp}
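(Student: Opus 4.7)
My plan is to mimic the block-matrix factorization used in the proof of \autoref{prp:ProdCommInftyCenter}, and to exploit the singularity of $a$ in order to bypass the infinite-center hypothesis. The case $n=2$ is \autoref{prp:ProdCommutatorsSize2}, so I assume $n\ge 3$. If $D$ is commutative, Botha's theorem (\cite{Bot97ProdMatPrescribedTraces}, cited in the proof of \autoref{prp:ProdCommInftyCenter}) already gives the conclusion, so I also assume $D$ is noncommutative, and fix a nonzero (hence invertible) commutator $d = [d_1,d_2]\in D$.

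Since $a$ is singular, there is a nonzero $\xi\in D^n$ with $a\xi = 0$. I complete $\xi$ to a basis of $D^n$ and conjugate by the corresponding change-of-basis matrix; as this preserves being a product of two commutators, I may assume
\[
a = \begin{pmatrix} 0 & b \\ 0 & x \end{pmatrix}, \qquad b\in M_{1,n-1}(D),\ x\in M_{n-1}(D).
\]
Mirroring the factorization in the proof of \autoref{prp:ProdCommInftyCenter} with the lower-left block set to zero, I then aim to write
\[
a = \begin{pmatrix} d & 0 \\ 0 & y \end{pmatrix}\cdot \begin{pmatrix} 0 & d^{-1}b \\ 0 & [v,w] \end{pmatrix}
\]
for some invertible commutator $y\in M_{n-1}(D)$ and $v\in \GL_{n-1}(D)$, $w\in M_{n-1}(D)$ with $x = y[v,w]$. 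Writing $y=[y_1,y_2]$, the first factor equals the commutator $[\operatorname{diag}(d_1,y_1),\operatorname{diag}(d_2,y_2)]$; for the second factor one verifies by direct block multiplication the identity
\[
\begin{pmatrix} 0 & d^{-1}b \\ 0 & [v,w] \end{pmatrix}
= \left[\begin{pmatrix} 0 & 0 \\ 0 & v \end{pmatrix},\,\begin{pmatrix} 0 & -d^{-1}b v^{-1} \\ 0 & w \end{pmatrix}\right].
\]
The decisive observation is that the vanishing of the $(2,1)$-block of $a$ (from singularity) lets one take $\lambda = 0$ in place of the nonzero central $\lambda$ required in the proof of \autoref{prp:ProdCommInftyCenter} to handle a general $(2,1)$-block $y^{-1}c$; this is what removes the infinite-center hypothesis.

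The hardest part will be producing the factorization $x = y[v,w]$ for arbitrary $x\in M_{n-1}(D)$. This is exactly the strengthened inductive statement used in the proof of \autoref{prp:ProdCommInftyCenter}, which there relies crucially on the infinite-center hypothesis. Without it I would proceed by an auxiliary induction on $n$, with \autoref{prp:ProdCommutatorsSize2} handling the base case $n-1 = 2$. For larger $n-1$ I would split into subcases: if $x$ is singular in $M_{n-1}(D)$, apply the inductive hypothesis of the present proposition and rewrite one commutator factor in the required form $[v,w]$ with $v$ invertible; if $x$ is invertible then $\ker a \cap \operatorname{Im} a = 0$, and a further similarity reduction puts $a$ in the block-diagonal form $\operatorname{diag}(0,x)$, reducing the problem to a factorization of the invertible matrix $x$ by a smaller-dimensional argument.
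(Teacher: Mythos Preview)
Your block-matrix setup is correct and the commutator identity for the second factor checks out; what fails is precisely the ``hardest part'' you flag, and the sketch you give for it does not close.

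You need, for every $x\in M_{n-1}(D)$, a factorization $x=y[v,w]$ with $y$ an invertible commutator and $v$ invertible (call this $(Q_{n-1})$). In your singular subcase you invoke the inductive hypothesis of the present proposition, but that only yields $x$ as \emph{some} product of two commutators, not the strong form; your phrase ``rewrite one commutator factor in the required form $[v,w]$ with $v$ invertible'' is exactly the missing step, and it is not free. Worse, even if you strengthened the inductive hypothesis to the strong form, your own factorization does not propagate it: the second factor is written as a commutator with $V=\begin{psmallmatrix}0&0\\0&v\end{psmallmatrix}$, which is singular, so you do not recover $(Q_n)$ from $(Q_{n-1})$. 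Getting $V$ invertible is precisely where \autoref{prp:ProdCommInftyCenter} spends the infinite-center hypothesis (to pick a central $\lambda\neq 0$ with $v-\lambda$ invertible), and you have not replaced that step.

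In your invertible subcase the reduction to $a\sim\operatorname{diag}(0,x')$ is valid, but it does not lower the dimension of the problem: you are still in $M_n(D)$, and to finish you would need to write the invertible $x'\in M_{n-1}(D)$ as a product of two commutators (or in the strong form) without the infinite-center assumption. For $n-1\ge 3$ that is exactly the open \autoref{qst} following \autoref{cd}, so your ``smaller-dimensional argument'' is, as stated, circular.

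The paper's proof avoids all of this by a completely different route: it uses that $D$ is right K-Hermite with Bass stable rank one to triangularize $a$ as in \autoref{prp:FactorizationHermiteSR1}, observes that singularity forces a zero diagonal entry in the lower-triangular factor, and then rescales by a suitable diagonal matrix so that both triangular factors have trace zero, whence each is a single commutator by \autoref{prp:TriangularTraceZero}. No induction on $n$ and no strong factorization of an auxiliary block is needed.
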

\begin{proof}
Let $D$ be a division ring, let $n \geq 2$, and let $a \in M_n(D)$ be non-invertible.
By \autoref{prp:ProdCommutatorsSize2}, we may assume that $n \geq 3$.
We may also assume that $D$ contains at least three elements, since otherwise $D$ is a field and then every matrix over $D$ is a product of two commutators by \cite[Theorem 4.1]{Bot97ProdMatPrescribedTraces}.

Since $D$ is a right K-Hermite ring and has Bass stable rank one by \autoref{lma:DivRingKHerSR1}, we can apply \autoref{prp:FactorizationHermiteSR1} and deduce that $a$ is similar to the product $bc$ for a lower triangular matrix $b$ and an upper triangular matrix $c$ with all diagonal entries equal to $1$.
Since the statement is invariant under similarity, we may assume that $a=bc$.
Further, since $a=bc$ is not invertible, using that $D$ is a division ring it follows that at least one of the diagonal entries of $b$ is zero. %, that is, $s \leq n-1$.
Without loss of generality, upon taking a similar matrix we may assume that $b_{nn}=0$.

Using that $D$ contains at least three elements and $n \geq 3$, we can choose nonzero $e_1,\ldots,e_{n-1} \in D$ such that $e_1+\ldots+e_{n-1}=0$.
Let $e \in M_n(D)$ be the diagonal matrix with diagonal entries $e_1,\ldots,e_{n-1},1$.
Then $a=bc=(be^{-1})(ec)$, and the matrix $be^{-1}$ is lower diagonal with diagonal entries $b_{1,1}e_1^{-1},\ldots,b_{n-1,n-1}e_{n-1}^{-1}, b_{nn}$. Similarly, $ec$ is upper diagonal with diagonal entries $e_1,\ldots,e_{n-1},1$.

Let $b'$ be equal to the matrix $be^{-1}$, except with the $(n,n)$-entry replaced by $-\sum_{j=1}^{n-1}b_{j,j}e_j^{-1}$; and let $c'$ be equal to the matrix $ec$, except with the $(n,n)$-entry replaced by $0$.
Then $a=b'c'$, and $b'$ and $c'$ are triangular matrices with zero trace, therefore commutators by \autoref{prp:TriangularTraceZero}.
The factorization is:
\begin{align*}
a
&= \begin{pmatrix}
b_{11} & 0 & 0 & \cdots & 0 \\
b_{21} & b_{22} & 0 & & \vdots \\
\vdots &  & \ddots \\
b_{n-1,1} & \ldots & & b_{n-1,n-1} & 0 \\
b_{n,1} & \ldots & & b_{n,n-1} & 0
\end{pmatrix}
\begin{pmatrix}
1 & c_{1,2} & c_{1,3} & \cdots & c_{1,n} \\
0 & 1 & c_{2,3} & \cdots & c_{2,n} \\
\vdots & & \ddots & & \vdots \\
0 & \ldots & & 1 & c_{n-1,n} \\
0 & \ldots & & 0 & 1
\end{pmatrix} \\
&= \begin{pmatrix}
b_{11}e_1^{-1} & 0 & 0 & \cdots & 0 \\
\ast & b_{22}e_2^{-1} & 0 & & \vdots \\
\vdots &  & \ddots \\
\ast & \ldots & & b_{n-1,n-1}e_{n-1}^{-1} & 0 \\
\ast & \ldots & & \ast & 0
\end{pmatrix}
\begin{pmatrix}
e_1 & \ast & \ast & \cdots & \ast \\
0 & e_2 & \ast & \cdots & \ast \\
\vdots & & \ddots & & \vdots \\
0 & \ldots & & e_{n-1} & \ast \\
0 & \ldots & & 0 & 1
\end{pmatrix} \\
&= \begin{pmatrix}
b_{11}e_1^{-1} & 0 & \cdots & 0 \\
%\ast & b_{22}e_2^{-1} & 0 & & \vdots \\
\vdots & \ddots & & \vdots \\
\ast & \ldots & b_{n-1,n-1}e_{n-1}^{-1} & 0 \\
\ast & \ldots & \ast & -\sum_{j=1}^{n-1} b_{jj}e_j^{-1}
\end{pmatrix}
\begin{pmatrix}
e_1 & \ast & \cdots & \ast \\
%0 & e_2 & \ast & \cdots & \ast \\
\vdots & \ddots & & \vdots \\
0 & \ldots & e_{n-1} & \ast \\
0 & \ldots & 0 & 0
\end{pmatrix}.
\end{align*}
\end{proof}

%==========================================================================================
%==========================================================================================
\section{Commutators with a fixed element}
\label{sec:FixedElement}

%==========================================================================================
In this section, we consider the more general problem of presenting elements in matrix algebras by commutators with a fixed matrix $a$. This is obviously considerably more demanding than allowing arbitrary commutators, so we will restrict ourselves to matrices over a field $F$.
Our goal is to prove Theorem~D from the introduction.

We remark that if a matrix $a\in M_n(F)$ has rank $k$, then any commutator $[a,x]$, with $x\in M_n(F)$, has rank at most $2k$. The same is therefore true for any product 
$[a,x_1]\cdots [a,x_m]$, with $x_i\in M_n(F)$. 
In order to represent every matrix in $M_n(F)$ by commutators 
$[a,x]$, their products   are thus insufficient and we are forced to involve  sums of products. 
Motivated by the invariant $\xi$ from \cite[Definition~5.1]{GarThi23arX:GenByCommutators} (see the introduction), we are particularly interested in  sums of products of two commutators. Another motivation is the result by Mesyan \cite[Theorem~15]{Mes06CommutatorRings} which states that every
trace zero matrix can be written as a sum of two commutators with
fixed matrices.

Our approach is based  on the concept of a derivation. 
Recall that a linear map~$D$ from an algebra $A$ to itself is called a {\em derivation} if $D(xy)=D(x)y+ xD(y)$ for all $x,y\in A$. 
For any $a\in A$, the map $x\mapsto [a,x]$ is a derivation. Such derivations are called {\em inner}. The problem that we address can obviously be formulated in terms of inner derivations.

Let us start with an observation which is implicit in Herstein's paper \cite{Her78NoteDerivations}.
Let~$A$ be any algebra and let $D\colon A \to A$ be a derivation. 
A straightforward verification shows that for all $x,y,z\in A$, we have
\[
xD^3(y)z = D\left(xD^2(y)z\right)  - D(x)D\left(D(y)z\right) - D\left(xD(y)\right)D(z)+ 2D(x)D(y)D(z).
\]
Accordingly, if $D^3\ne 0$, then the subalgebra $\overline{D(A)}$ generated 
by the image of $D$ contains a nonzero ideal of $A$. In particular, $\overline{D(A)}$ is equal to the whole algebra $A$ if $A$ is simple. More precisely,  the above formula shows that  every element in $A$ is a sum of products of at most three elements from the image of $D$. 

It should be remarked that the assumption that $D^3\ne 0$ is necessary.
Indeed, every element $a\in A$ such that $a^2=0$ gives rise to the inner derivation $D(x)=[a,x]$ which, as can be easily checked, satisfies  $D^3=0$ and $a\overline{D(A)} a=\{0\}$. The latter implies that $\overline{D(A)}$ cannot be equal to $A$ if
$a\ne 0$ and $A$ is simple.

The above observation, however, does not help us if we wish to present every element in $A$ as a sum of products of exactly two elements from the image of an (inner) derivation $D$. 
A slightly more sophisticated approach is necessary to tackle this problem. 
We start with the following result.

%==========================================================================================
\begin{lma}
\label{lder} 
Let $D$ be a derivation of an algebra $A$, and let $b,c\in A$ satisfy $D(b)c=0$. Then
\[
xD(b)D(c)z = D(xb)D(cz) - D(x)D(bcz)
\]
for all $x,z\in A$.
Therefore, if $D(b)D(c)\ne 0$ and $A$ is simple, then every element in $A$ is a sum of  elements of the form $D(x)D(y)$ with $x,y\in A$.
\end{lma}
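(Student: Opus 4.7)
The proof will split into two steps: first I will verify the identity by a direct Leibniz calculation, then I will deduce the ideal-theoretic consequence by showing that sums of products $D(x)D(y)$ form a subspace containing a two-sided ideal that meets $D(b)D(c)$.

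For the identity, the plan is to expand $D(xb) = D(x)b + xD(b)$ and $D(cz) = D(c)z + cD(z)$ and multiply out, obtaining the four summands
\[ D(xb)D(cz) = D(x)bD(c)z + D(x)bcD(z) + xD(b)D(c)z + xD(b)cD(z). \]
The hypothesis $D(b)c = 0$ kills the final term. On the other hand, applying Leibniz twice gives $D(bcz) = D(b)cz + bD(c)z + bcD(z)$, and the first of these vanishes, again because $D(b)c = 0$. Multiplying by $D(x)$ on the left reproduces precisely the first two summands of the expansion above. Subtracting yields the stated identity $xD(b)D(c)z = D(xb)D(cz) - D(x)D(bcz)$.

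For the second assertion, let $S\subseteq A$ be the linear span of all products $D(x)D(y)$; this is automatically a vector subspace. The identity just proved shows $xD(b)D(c)z\in S$ for all $x,z\in A$. To cover the remaining elements required to generate a two-sided ideal when $A$ is not unital, I will record two one-sided variants obtained by the same Leibniz-plus-hypothesis manipulation: namely
\[ xD(b)D(c) = D(xb)D(c) - D(x)D(bc) \andSep D(b)D(c)z = D(b)D(cz), \]
both manifestly in $S$, along with $D(b)D(c)\in S$ itself. Hence $S$ contains the two-sided ideal of $A$ generated by $D(b)D(c)$. Simplicity of $A$ together with $D(b)D(c)\neq 0$ then forces this ideal to be all of $A$, so $S=A$.

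There is no serious obstacle. The only point that requires a little care is the bookkeeping in the non-unital case, ensuring that every element of the ideal generated by $D(b)D(c)$ really lands in $S$; this is precisely why the one-sided variants of the identity are needed in addition to the main one.
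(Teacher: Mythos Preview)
Your proof is correct and follows essentially the same approach as the paper's: both expand $D(xb)D(cz)$ and $D(bcz)$ via the Leibniz rule and use $D(b)c=0$ to eliminate the unwanted terms, then invoke simplicity. Your treatment is in fact slightly more careful than the paper's in the non-unital case, recording the one-sided variants $xD(b)D(c)=D(xb)D(c)-D(x)D(bc)$ and $D(b)D(c)z=D(b)D(cz)$ needed to show that the full two-sided ideal generated by $D(b)D(c)$ lies in $S$, whereas the paper asserts this containment directly from the main identity alone.
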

\begin{proof}
Note that $D(b)c =0$ implies 
\[
D(xb)D(cz)= D(x)bcD(z) + xD(b)D(c)z + D(x)bD(c)z
\]
and
\[
D(bcz)= bD(c)z + bcD(z),
\]
from which the formula from the statement of the lemma follows. 
If $D(b)D(c)\ne 0$, then this formula implies that the ideal of $A$ generated by $D(b)D(c)$ is contained in the set of sums of elements of the form $D(x)D(y)$ with $x,y\in A$.
Therefore, this set is equal to $A$ if $A$ is simple.
\end{proof}

%==========================================================================================
\autoref{lder} raises the question of when do there exist elements $b,c\in A$ such that $D(b)c =0$ and $D(b)D(c)\ne 0$. 
In light of our goal, we are interested in the case where $D$ is an inner derivation and $A=M_n(F)$.
We will consider a somewhat more general situation in \autoref{lder2}. 
To this end, we need a result of general interest which is almost certainly known. 
However, we were unable to find a reference that would cover vector spaces over arbitrary fields. 
We therefore provide a proof which was shown to us by Cl{\'e}ment de Seguins Pazzis, who kindly allowed us to include it here.

Recall that an endomorphism $a$ of an $F$-vector space $V$ is said to be \emph{algebraic} if there exists a nonzero polynomial $p\in F[X]$ with coefficients in $F$ such that $p(a)=0$. Moreover, the \emph{degree} of $a$ is the smallest degree of such a polynomial.

%==========================================================================================
\begin{lma}
\label{lc} 
Let $n$ be a natural number. An endomorphism  $a$ of a vector space  $V$ (over any  field) is algebraic of degree at most $n$ if and only if  the set $\{v, av, \dots,a^n v\}$ is linearly dependent for every $v\in V$.
\end{lma}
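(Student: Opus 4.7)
The forward implication is routine: if $p(a)=0$ for some nonzero $p=\sum_{i=0}^n c_iX^i\in F[X]$ with $\deg p\le n$, then $\sum_{i=0}^n c_i\,a^iv=0$ for every $v\in V$, exhibiting the required linear dependence. My plan for the interesting direction is to argue via minimal annihilators, viewing $V$ as an $F[X]$-module through $a$.

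For each $v\in V$, the hypothesis produces a nonzero polynomial of degree at most $n$ sending $v$ to $0$ under $a$, so the annihilator ideal $\mathrm{Ann}(v)=\{p\in F[X]:p(a)v=0\}$ is nonzero. Since $F[X]$ is a PID, it has a unique monic generator $m_v$, and $\deg m_v\le n$. The set of degrees $\{\deg m_v:v\in V\}$ is a nonempty subset of $\{0,1,\ldots,n\}$, so it attains its maximum at some $v_0\in V$; set $N:=\deg m_{v_0}$. I will show that $m_{v_0}(a)=0$ on all of $V$, which finishes the proof, since then $a$ is algebraic of degree at most $N\le n$.

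The key step is an lcm lemma: for any $v,w\in V$, there exists $u\in V$ with $m_u=\mathrm{lcm}(m_v,m_w)$. To prove it, I would factor $h:=\mathrm{lcm}(m_v,m_w)$ as a product of powers of distinct monic irreducibles of $F[X]$, and split this factorization as $h=fg$ with $f\mid m_v$, $g\mid m_w$, and $\gcd(f,g)=1$ (assigning each irreducible power appearing in $h$ to whichever of $m_v,m_w$ contains it). Set $v':=(m_v/f)(a)v$ and $w':=(m_w/g)(a)w$; a short verification using minimality of $m_v,m_w$ gives $m_{v'}=f$ and $m_{w'}=g$. Because $\gcd(f,g)=1$, $u:=v'+w'$ satisfies $m_u=fg=h$: indeed $h(a)u=0$, while any $p\in F[X]$ with $p(a)u=0$ must be divisible by both $f$ and $g$ after multiplying by $g$ resp.\ $f$ and applying minimality.

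Granted the lemma, suppose for contradiction that $m_{v_0}(a)\ne 0$ on $V$; choose $w\in V$ with $m_{v_0}(a)w\ne 0$, so $m_w\nmid m_{v_0}$, whence $\deg\mathrm{lcm}(m_{v_0},m_w)>N$. The lemma then produces a $u\in V$ with $\deg m_u>N$, contradicting the maximality of $N$. Hence $m_{v_0}(a)=0$, as desired. The main obstacle is checking the lemma carefully over an arbitrary (possibly non-algebraically-closed, possibly finite) field, and this is precisely where the coprime splitting of $\mathrm{lcm}(m_v,m_w)$ into a product of pieces dividing $m_v$ and $m_w$ is needed; everything else is a direct consequence of $F[X]$ being a PID.
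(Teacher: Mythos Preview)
Your proof is correct and shares the paper's overall architecture: pick $v_0$ whose local minimal polynomial $m_{v_0}$ has maximal degree among all $m_v$, then show $m_{v_0}(a)$ kills every vector. The divergence is in the key technical step establishing this. The paper restricts $a$ to the finite-dimensional invariant subspace $V_{v_0}+V_v$ and invokes the Frobenius (rational) canonical form of this restriction to exhibit a single vector $w$ whose minimal polynomial equals the minimal polynomial of the restriction; maximality of $\deg m_{v_0}$ then forces $m_{v_0}(a)v=0$. You instead prove an ``lcm lemma'' directly in the $F[X]$-module $V$: for any $v,w$ there is $u$ with $m_u=\operatorname{lcm}(m_v,m_w)$, built by a coprime splitting and the sum $u=v'+w'$. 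Your route is more elementary and self-contained---it uses only that $F[X]$ is a PID and avoids any appeal to the structure theorem or canonical forms---while the paper's route is shorter once one is willing to quote Frobenius normal form. Both arguments work uniformly over arbitrary fields, which is the point of the lemma.
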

\begin{proof}
It suffices to prove the  `if' part. 
Thus, assume that the set $\{v, av, \dots,a^n v\}$ is linearly dependent for each $v\in V$. 
Denote by $V_v$ the linear span of this set, and by $p_v$ the minimal polynomial of the restriction of $a$ to $V_v$. 
Pick $v_0\in V$ such that $p_{v_0}$ has maximal degree. 
Our goal is to show that $p_{v_0}(a) =0$. 
Since the dimension of $V_{v_0}$ is at most $n$ by our assumption, this will prove the result.

Fix $v\in V$ and let us show that $p_{v_0}(a)v=0$. 
Let $\tilde{a}$ denote the restriction of $a$ to $V_{v_0}+ V_v$, and let
$\tilde{p}$ 
be the minimal polynomial of $\tilde{a}$. Since  $V_{v_0}\subseteq V_{v_0}+ V_v$, $p_{v_0}$ divides $\tilde{p}$. If 
$\tilde{p}$ was equal to 
$p_w$ for some $w\in V$, then it would follow, in view of the choice of $v_0$,
that $p_{v_0}=\tilde{p}$ and hence $p_{v_0}(a)v=0$, as desired.

The fact that $\tilde{p}$ is really equal to 
$p_w$ for some $w\in V$ follows by examining the Frobenius canonical form of  $\tilde{a}$. Indeed, $\tilde{a}$
can be represented in some basis as a block-diagonal matrix with blocks being companion matrices whose associated polynomials form a sequence that is non-increasing with respect to the divisibility relation. The first polynomial  in the sequence is the minimal polynomial $\tilde{p}$, and, denoting the degree of  $\tilde{p}$ by $d$,
the first $d$ vectors in 
the basis are $w,aw,\dots, a^{d-1}w$ for some $w\in \tilde{V}$. Since these vectors are linearly independent, the degree of
$p_w$ is at least $d$. On the other hand, $p_w$  divides $\tilde{p}$ since $w\in \tilde{V}$. 
Therefore, $\tilde{p}=p_w$.
\end{proof}

%==========================================================================================
\begin{lma}
\label{lder2} 
Let $A $ be the algebra of all endomorphisms of the vector space $V$. 
Let $a\in A$ and let $D$ be the inner derivation given by $D(x) = [a,x]$. 
If $a$ is not algebraic of degree at most $2$, then there exists an element $b \in A$ such that $D(b)b =0$ and $D(b)^2\ne 0$.
\end{lma}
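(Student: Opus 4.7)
The plan is to construct $b$ as a rank-one nilpotent operator of the form $bv = f(v)u$, where $u \in V$ and $f \in V^*$ are chosen to satisfy three scalar conditions derived from the desired identities. Writing out the relevant products, for $b(v) = f(v)u$ one has
\[
b^2 v = f(u)f(v)\,u, \quad bab(v) = f(au)f(v)\,u, \quad ba^2 b(v) = f(a^2 u)f(v)\,u.
\]
So imposing $f(u) = 0$ and $f(au) = 0$ already forces $b^2 = 0$ and $bab = 0$, hence $D(b)b = ab^2 - bab = 0$. Moreover, expanding
\[
D(b)^2 = abab - ab^2 a - ba^2 b + baba
\]
and using $b^2 = 0$ together with $bab = 0$ (which in turn yields $abab = a(bab) = 0$ and $baba = (bab)a = 0$), one gets $D(b)^2 = -ba^2 b$. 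Thus the problem reduces to finding $u \in V$ and $f \in V^*$ with $f(u) = f(au) = 0$ and $f(a^2 u) \neq 0$.

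Next I would invoke \autoref{lc} with $n = 2$: the hypothesis that $a$ is not algebraic of degree at most $2$ provides a vector $u \in V$ such that $\{u, au, a^2 u\}$ is linearly independent. Extending this set to a basis of $V$ and defining $f \in V^*$ to take the values $f(u) = 0$, $f(au) = 0$, $f(a^2 u) = 1$ on these three vectors (and, say, $0$ on the remaining basis vectors) gives the functional required in the previous paragraph. This step works over any field since defining a linear functional by prescribing its values on a basis is valid without any scalar restrictions.

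Finally, I would verify the conclusion directly: the operator $b$ defined by $bv = f(v)u$ satisfies $D(b)b = 0$ by the reduction above, and $D(b)^2 = -ba^2 b$ sends $a^2 u$ to $-f(a^2 u)\,u = -u \neq 0$, so $D(b)^2 \neq 0$.

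There is no real obstacle here beyond guessing the right ansatz: the argument is a direct construction, and all the algebraic identities reduce to the scalars $f(u)$, $f(au)$, $f(a^2 u)$. The only substantive input is the existence of $u$ with $\{u, au, a^2 u\}$ linearly independent, which is exactly what \autoref{lc} (and hence the hypothesis on the degree of $a$) supplies.
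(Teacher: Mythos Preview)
Your proof is correct and is essentially the same as the paper's: both construct $b$ as the rank-one operator $v\mapsto f(v)u$ (the paper writes it as $u\mapsto f(u)v$, swapping the variable names) with $f(u)=f(au)=0$ and $f(a^2u)=1$, using \autoref{lc} to produce the vector $u$, and then verify $D(b)b=ab^2-bab=0$ and $D(b)^2=-ba^2b\neq 0$. The only difference is presentational---you spell out the four cross terms in $D(b)^2$ explicitly, whereas the paper records the final identity $D(b)^2=-b$ directly.
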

\begin{proof} 
In light of our assumption, \autoref{lc} shows that there exists $v\in V$ such that $a^2v$ does not lie in the linear span of $\{v, av\}$. 
Therefore, there is a linear functional $f$ on $V$ such that $f(v)= f(av)=0$ and $f(a^2v)=1$. 
Let $b$ be the rank one endomorphism defined by $bu= f(u)v$ for all $u\in V$. 
Observe that $b^2=bab=0$ and $ba^2b=b$. 
Consequently, $D(b)b= ab^2 - bab=0$ and $D(b)^2 = (ab - ba)^2 = -ba^2 b = -b\ne 0$.
\end{proof}

%==========================================================================================
We are now ready to prove the main result of the section.
We note that the number of summands needed in statement~(2) 
is at most $n^2$, since this is the linear dimension of $M_n(F)$.
It is conceivable that the smallest number of summands needed may be related to the degree
of the minimal polynomial of the matrix $a$, but we have not explored 
this any further.
%
%We remark that the next result refines \cite[Theorem~3.4]{GarThi23arX:GenByCommutators} for matrix algebras over fields.

%==========================================================================================
\begin{thm}
\label{tder} 
Let $F$ be a field, let $n\ge 2$, and let $a\in M_n(F)$. 
The following two conditions are equivalent:
\begin{enumerate} 
\item
The degree of the minimal polynomial of $a$ is greater than $2$.
%\item
%There exists a natural number $N$ such that for every $x \in M_n(F)$ there exist $y_j,z_j \in M_n(F)$ for $j=1,\ldots,N$ such that
%\[
%x = \sum_{j=1}^N [a,y_j][a,z_j].
%\]
\item
Every element in $M_n(F)$ can be written as a sum of elements of the form $[a,x][a,y]$ with $x,y\in M_n(F)$. 
\end{enumerate}
\end{thm}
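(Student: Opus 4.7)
Here is how I would attack the equivalence.

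The implication $(1)\Rightarrow(2)$ falls out almost directly from \autoref{lder} and \autoref{lder2}. Let $D$ be the inner derivation $D(x)=[a,x]$ on $A=M_n(F)$. Since the minimal polynomial of $a$ has degree at least $3$, $a$ is not algebraic of degree at most $2$ as an endomorphism of $F^n$, so \autoref{lder2} produces a $b\in M_n(F)$ with $D(b)b=0$ and $D(b)^2\ne 0$. Taking $c=b$ in \autoref{lder} yields the hypotheses $D(b)c=0$ and $D(b)D(c)\ne 0$, and since $M_n(F)$ is simple, \autoref{lder} immediately gives that every element of $M_n(F)$ is a sum of products $D(x)D(y)=[a,x][a,y]$.

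The reverse implication $(2)\Rightarrow(1)$ is the content I would prove by contrapositive. Assume the minimal polynomial of $a$ has degree at most $2$, write it as $p(X)=X^2+\alpha X+\beta$ (allowing the degenerate case in which $a$ is scalar, for which $D\equiv 0$ and there is nothing to do). The key computation is to apply $D$ to the relation $a^2=-\alpha a-\beta$: expanding $aD(x)=a^2x-axa$ and $D(x)a=axa-xa^2$ and substituting $a^2=-\alpha a-\beta$ gives the identity
\[
aD(x)+D(x)a=-\alpha\, D(x)
\]
valid for every $x\in M_n(F)$. From this I would derive that $D(x)D(y)$ commutes with $a$: indeed,
\[
a D(x)D(y)=\bigl(-\alpha D(x)-D(x)a\bigr)D(y)=-\alpha D(x)D(y)-D(x)aD(y),
\]
and applying the same identity again to $aD(y)$ rearranges everything into $D(x)D(y)a$. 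This calculation works uniformly in all characteristics, so no special treatment of $\mathrm{char}\,F=2$ is needed.

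Once every individual product $[a,x][a,y]$ lies in the centralizer $C_{M_n(F)}(a)$, so does any sum. But $\deg(p)\le 2$ forces $a$ either to be scalar or to have centralizer a proper subspace of $M_n(F)$, so (2) cannot hold. The only subtle point is the derivation of the identity $aD(x)+D(x)a=-\alpha D(x)$; the remaining step is a short anticommutation manipulation. I expect the computational identity above to be the crux, but once it is in hand the contradiction is immediate.
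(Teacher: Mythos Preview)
Your argument for $(1)\Rightarrow(2)$ is exactly the paper's: apply \autoref{lder2} to get $b$ with $D(b)b=0$ and $D(b)^2\neq 0$, then feed $c=b$ into \autoref{lder} and use simplicity of $M_n(F)$.

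For $(2)\Rightarrow(1)$ your route is correct but genuinely different from the paper's. The paper passes to the algebraic closure $\bar F$, factors the minimal polynomial as $(X-\lambda)(X-\mu)$, expands $[a,x][a,y]=[(a-\lambda),x][(a-\mu),y]$, and deduces the two-sided annihilation $(a-\mu)\,S\,(a-\lambda)=0$; it then argues via matrix units that this would force $(a-\mu)M_n(\bar F)(a-\lambda)=0$, a contradiction. You instead stay over $F$ and extract the anticommutation identity $aD(x)+D(x)a=-\alpha D(x)$ directly from $a^2=-\alpha a-\beta$, and from it conclude that each $D(x)D(y)$ lies in the centralizer $C_{M_n(F)}(a)$, which is proper once $a$ is non-scalar. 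Your computation checks out: $aD(x)+D(x)a=a^2x-xa^2=-\alpha(ax-xa)$, and then two applications give $aD(x)D(y)=D(x)D(y)a$. This avoids the extension of scalars entirely and yields a sharper structural statement (the span of products $[a,x][a,y]$ sits inside $C_{M_n(F)}(a)$, not merely in the solution set of a bilinear annihilation condition). The paper's approach, on the other hand, makes the obstruction visible in terms of the eigenvalues and would generalize more readily to polynomials that split. Both arguments are short; yours is the more elementary of the two.
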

\begin{proof} 
Let us show that~(1) implies~(2).
Condition~(1) can be read as saying that~$a$ is not algebraic of degree at most~$2$. If we denote by $D$ the inner derivation given by $D(c)=[a,c]$ for $c \in M_n(F)$, then by \autoref{lder2} there exists an element $b\in A$ such that $D(b)b =0$ and $D(b)^2\ne 0$.

Since $D(b)^2 \neq 0$, and since the algebra $M_n(F)$ is simple, we find a natural number $M$ and elements $r_j,s_j \in M_n(F)$ for $j=1,\ldots,M$ such that
\[
1 
= \sum_{j=1}^M r_jD(b)^2s_j.
\]
Given $x \in M_n(F)$, it follows from \autoref{lder} that 
\begin{align*}
x 
&= \sum_{j=1}^M xr_jD(b)^2s_j
= \sum_{j=1}^M \big( D(xr_jb)D(bs_j) - D(x)D(b^2s_j) \big) \\
&= \sum_{j=1}^M \big( [a,xr_jb][a,bs_j] + [a,x][a,-b^2s_j] \big).
\end{align*}
This proves (2).

In order to show the converse, assume that~(1) does not hold and let us show that~(2) does not hold either. 
The case where the degree of the minimal polynomial of~$a$ is~$1$ is trivial, so we may assume that it is equal to $2$. 
Let $\bar{F}$ denote the algebraic closure of $F$ and let $\lambda,\mu \in \bar{F}$ satisfy $(a-\lambda 1_n)(a-\mu 1_n)=0$. 
Using at the first step that $\lambda 1_n$ and $\mu 1_n$ commute with all the elements of $M_n(F)$, for all $x,y\in M_n(F)$ we get
\begin{align*} 
&[a,x][a,y] 
= [(a-\lambda 1_n),x][(a-\mu 1_n),y] \\
&\qquad = (a-\lambda 1_n)x(a-\mu 1_n)y - (a-\lambda 1_n)xy(a-\mu 1_n) + x(a-\lambda 1_n) y (a-\mu 1_n).
\end{align*}
This implies that
\[
(a-\mu 1_n)[a,x][a,y] (a-\lambda 1_n)=0. 
\]
Denoting by $S$ the set of sums of elements of the form $[a,x][a,y]$, we thus have
\[
(a-\mu 1_n)S (a-\lambda 1_n)=\{0\}.
\]
Assuming that $S= M_n(F)$, it follows that $(a-\mu 1_n)e_{ij} (a-\lambda 1_n)=0 $ for every matrix unit $e_{ij}$ in $M_n(F)$, which in turn implies that
\[
(a-\mu 1_n)z_{ij}e_{ij} (a-\lambda 1_n)=0 
\]
for every $z_{ij}\in \bar{F}$. 
Thus, we deduce that $(a-\mu 1_n)M_n(\bar{F}) (a-\lambda 1_n)=\{0\}$. 
However, this is impossible since $a-\mu 1_n$ and $a-\lambda 1_n$ are nonzero matrices of $M_n(\bar{F})$.
Therefore, $S\ne M_n(F)$.
\end{proof}

%==========================================================================================
%==========================================================================================
%\bibliographystyle{../../aomalphaMyShort}
%\bibliography{../../References}

\begin{thebibliography}{KBMRY20}

\bibitem[AM57]{AlbMuc57MatricesTraceZero}
\bgroup\scshape{}A.~A. Albert\egroup{} and
  \bgroup\scshape{}B.~Muckenhoupt\egroup{}, On matrices of trace zeros,
  \emph{Michigan Math. J.} \textbf{4} (1957), 1--3.

\bibitem[AL22]{AlbLut22SRDiagASHCrProd}
\bgroup\scshape{}M.~Alboiu\egroup{} and \bgroup\scshape{}J.~Lutley\egroup{},
  The stable rank of diagonal {ASH} algebras and crossed products by minimal
  homeomorphisms,  \emph{M\"{u}nster J. Math.} \textbf{15} (2022), 167--220.

\bibitem[AR94]{AmiRow94RedTr0}
\bgroup\scshape{}S.~A. Amitsur\egroup{} and \bgroup\scshape{}L.~H.
  Rowen\egroup{}, Elements of reduced trace {$0$},  \emph{Israel J. Math.}
  \textbf{87} (1994), 161--179.

\bibitem[APRT22]{AntPerRobThi22CuntzSR1}
\bgroup\scshape{}R.~Antoine\egroup{}, \bgroup\scshape{}F.~Perera\egroup{},
  \bgroup\scshape{}L.~Robert\egroup{}, and \bgroup\scshape{}H.~Thiel\egroup{},
  \ca{s} of stable rank one and their {C}untz semigroups,  \emph{Duke Math. J.}
  \textbf{171} (2022), 33--99.

\bibitem[AGOP97]{AraGooOMePar97DiagonalizationRegRg}
\bgroup\scshape{}P.~Ara\egroup{}, \bgroup\scshape{}K.~R. Goodearl\egroup{},
  \bgroup\scshape{}K.~C. O'Meara\egroup{}, and
  \bgroup\scshape{}E.~Pardo\egroup{}, Diagonalization of matrices over regular
  rings,  \emph{Linear Algebra Appl.} \textbf{265} (1997), 147--163.

\bibitem[Azo74]{Azo74BorelMeasurabilityLinAlg}
\bgroup\scshape{}E.~A. Azoff\egroup{}, Borel measurability in linear algebra,
  \emph{Proc. Amer. Math. Soc.} \textbf{42} (1974), 346--350.

\bibitem[Ber72]{Ber72BearRgs}
\bgroup\scshape{}S.~K. Berberian\egroup{}, \emph{Baer *-rings},
  Springer-Verlag, New York-Berlin, 1972, Die Grundlehren der mathematischen
  Wissenschaften, Band 195.

\bibitem[Bot97]{Bot97ProdMatPrescribedTraces}
\bgroup\scshape{}J.~D. Botha\egroup{}, Products of matrices with prescribed
  nullities and traces,  \emph{Linear Algebra Appl.} \textbf{252} (1997),
  173--198.

\bibitem[Che11]{Che11BookSR}
\bgroup\scshape{}H.~Chen\egroup{}, \emph{Rings related to stable range
  conditions}, \emph{Series in Algebra} \textbf{11}, World Scientific
  Publishing Co. Pte. Ltd., Hackensack, NJ, 2011.

\bibitem[CC04]{CheChe04ProdThreeTriangular}
\bgroup\scshape{}H.~Chen\egroup{} and \bgroup\scshape{}M.~Chen\egroup{}, On
  products of three triangular matrices over associative rings,  \emph{Linear
  Algebra Appl.} \textbf{387} (2004), 297--311.

\bibitem[Coh73]{Coh73SimReductionOverSkewField}
\bgroup\scshape{}P.~M. Cohn\egroup{}, The similarity reduction of matrices over
  a skew field,  \emph{Math. Z.} \textbf{132} (1973), 151--163.

\bibitem[DP63]{DecPea63MatricesRgCtsFctsStonian}
\bgroup\scshape{}D.~Deckard\egroup{} and \bgroup\scshape{}C.~Pearcy\egroup{},
  On matrices over the ring of continuous complex valued functions on a
  {S}tonian space,  \emph{Proc. Amer. Math. Soc.} \textbf{14} (1963), 322--328.

\bibitem[DHR97]{DykHaaRor97SRFreeProd}
\bgroup\scshape{}K.~Dykema\egroup{}, \bgroup\scshape{}U.~Haagerup\egroup{}, and
  \bgroup\scshape{}M.~R{\o}rdam\egroup{}, The stable rank of some free product
  \ca{s},  \emph{Duke Math. J.} \textbf{90} (1997), 95--121.

\bibitem[EHT09]{EllHoTom09ClassSimpleSR1}
\bgroup\scshape{}G.~A. Elliott\egroup{}, \bgroup\scshape{}T.~M. Ho\egroup{},
  and \bgroup\scshape{}A.~S. Toms\egroup{}, A class of simple \ca{s} with
  stable rank one,  \emph{J. Funct. Anal.} \textbf{256} (2009), 307--322.

\bibitem[GT23]{GarThi23arX:GenByCommutators}
\bgroup\scshape{}E.~Gardella\egroup{} and \bgroup\scshape{}H.~Thiel\egroup{},
  Rings and \ca{s} generated by commutators, preprint (arXiv:2301.05958
  [math.RA]), 2023.

\bibitem[GT24]{GarThi24pre:ProdCommutatorsVNA}
\bgroup\scshape{}E.~Gardella\egroup{} and \bgroup\scshape{}H.~Thiel\egroup{},
  Products of additive commutators in von {N}eumann algebras, in preparation,
  2024.

\bibitem[Goo79]{Goo79RegRings}
\bgroup\scshape{}K.~R. Goodearl\egroup{}, \emph{von {N}eumann regular rings},
  \emph{Monographs and Studies in Mathematics} \textbf{4}, Pitman (Advanced
  Publishing Program), Boston, Mass.-London, 1979.

\bibitem[HV84]{HerVas84StableRangeCAlg}
\bgroup\scshape{}R.~H. Herman\egroup{} and \bgroup\scshape{}L.~N.
  Vaserstein\egroup{}, The stable range of \ca{s},  \emph{Invent. Math.}
  \textbf{77} (1984), 553--555.

\bibitem[Her78]{Her78NoteDerivations}
\bgroup\scshape{}I.~N. Herstein\egroup{}, A note on derivations,  \emph{Canad.
  Math. Bull.} \textbf{21} (1978), 369--370.

\bibitem[KBMRY20]{KanMalRowYav20EvalNcPolynomials}
\bgroup\scshape{}A.~Kanel-Belov\egroup{}, \bgroup\scshape{}S.~Malev\egroup{},
  \bgroup\scshape{}L.~Rowen\egroup{}, and \bgroup\scshape{}R.~Yavich\egroup{},
  Evaluations of noncommutative polynomials on algebras: methods and problems,
  and the {L}'vov-{K}aplansky conjecture,  \emph{SIGMA Symmetry Integrability
  Geom. Methods Appl.} \textbf{16} (2020), Paper No. 071, 61.

\bibitem[Kap49]{Kap49ElemDivisors}
\bgroup\scshape{}I.~Kaplansky\egroup{}, Elementary divisors and modules,
  \emph{Trans. Amer. Math. Soc.} \textbf{66} (1949), 464--491.

\bibitem[KP14]{KauPas14CommutatorsMatrices}
\bgroup\scshape{}M.~Kaufman\egroup{} and \bgroup\scshape{}L.~Pasley\egroup{},
  On commutators of matrices over unital rings,  \emph{Involve} \textbf{7}
  (2014), 769--772.

\bibitem[Lam06]{Lam06SerresProblem}
\bgroup\scshape{}T.~Y. Lam\egroup{}, \emph{Serre's problem on projective
  modules}, \emph{Springer Monographs in Mathematics}, Springer-Verlag, Berlin,
  2006.

\bibitem[MM82]{MenMon82RegRgSR2}
\bgroup\scshape{}P.~Menal\egroup{} and \bgroup\scshape{}J.~Moncasi\egroup{}, On
  regular rings with stable range {$2$},  \emph{J. Pure Appl. Algebra}
  \textbf{24} (1982), 25--40.

\bibitem[Mes06]{Mes06CommutatorRings}
\bgroup\scshape{}Z.~Mesyan\egroup{}, Commutator rings,  \emph{Bull. Austral.
  Math. Soc.} \textbf{74} (2006), 279--288.

\bibitem[Rie83]{Rie83DimSRKThy}
\bgroup\scshape{}M.~A. Rieffel\egroup{}, Dimension and stable rank in the
  {$K$}-theory of \ca{s},  \emph{Proc. London Math. Soc. (3)} \textbf{46}
  (1983), 301--333.

\bibitem[R{\o}r04]{Ror04StableRealRankZ}
\bgroup\scshape{}M.~R{\o}rdam\egroup{}, The stable and the real rank of
  {$\mathcal{Z}$}-absorbing \ca{s},  \emph{Internat. J. Math.} \textbf{15}
  (2004), 1065--1084.

\bibitem[Sho37]{Sho37SaetzeMatrizen}
\bgroup\scshape{}K.~Shoda\egroup{}, Einige {S}\"{a}tze \"{u}ber {M}atrizen,
  \emph{Jpn. J. Math.} \textbf{13} (1937), 361--365.

\bibitem[Thi20]{Thi20RksOps}
\bgroup\scshape{}H.~Thiel\egroup{}, Ranks of operators in simple \ca{s} with
  stable rank one,  \emph{Comm. Math. Phys.} \textbf{377} (2020), 37--76.

\bibitem[Tom08]{Tom08ClassificationNuclear}
\bgroup\scshape{}A.~S. Toms\egroup{}, On the classification problem for nuclear
  \ca{s},  \emph{Ann. of Math. (2)} \textbf{167} (2008), 1029--1044.

\bibitem[Vas84]{Vas84BassSR}
\bgroup\scshape{}L.~N. Vaserstein\egroup{}, Bass's first stable range
  condition,  in \emph{Proceedings of the {L}uminy conference on algebraic
  {$K$}-theory ({L}uminy, 1983)}, \textbf{34}, 1984, pp.~319--330.

\bibitem[VW90]{VasWhe90CommutatorsSR1}
\bgroup\scshape{}L.~N. Vaserstein\egroup{} and
  \bgroup\scshape{}E.~Wheland\egroup{}, Commutators and companion matrices over
  rings of stable rank {$1$},  \emph{Linear Algebra Appl.} \textbf{142} (1990),
  263--277.

\bibitem[Vil98]{Vil98SimpleCaPerforation}
\bgroup\scshape{}J.~Villadsen\egroup{}, Simple \ca{s} with perforation,
  \emph{J. Funct. Anal.} \textbf{154} (1998), 110--116.

\bibitem[Wu89]{Wu89OpFactorization}
\bgroup\scshape{}P.~Y. Wu\egroup{}, The operator factorization problems,
  \emph{Linear Algebra Appl.} \textbf{117} (1989), 35--63.

\end{thebibliography}

\providecommand{\bysame}{\leavevmode\hbox to3em{\hrulefill}\thinspace}

\end{document}